\documentclass[11pt, reqno]{amsart}
\pdfoutput=1

\usepackage{amssymb}
\usepackage{mathtools}
\usepackage{mathrsfs}
\usepackage{stmaryrd}
\usepackage{upgreek}
\usepackage{centernot}
\usepackage{cancel}

\usepackage{bm}

\usepackage[T2A,T1]{fontenc}
\usepackage[utf8]{inputenc}
\usepackage[russian, french, english]{babel}
\usepackage{lmodern}
\usepackage[spacing=true,kerning=true,babel=true,nopatch=footnote]{microtype}
\usepackage[shortcuts]{extdash}
\usepackage[inline]{enumitem}
\usepackage{indentfirst}
\usepackage{xspace}
\usepackage{xcolor}
\usepackage{csquotes}

\usepackage[a4paper, left=2cm, right=2cm, top=1in, bottom=1in]{geometry}
\usepackage{graphicx}
\usepackage{wrapfig}
\usepackage{float}
\usepackage{array}
\usepackage{multicol}
\usepackage{mdframed}
\usepackage[skins]{tcolorbox}
\usepackage{framed}
\usepackage[justification=centering, labelfont=bf, font=small]{caption}
\usepackage{algorithm}

\usepackage{tikz}
\usetikzlibrary{shapes}
\usetikzlibrary{arrows.meta}
\usetikzlibrary{decorations.pathmorphing}
\usetikzlibrary{patterns}

\usepackage{titlesec}
\usepackage{titletoc}

\usepackage[
    sortcites,
    backend=biber,
    style=alphabetic,
    sorting=nyt,
    maxnames=100,
    backref=false
]{biblatex}
\usepackage[hidelinks]{hyperref}
\usepackage{cleveref}

\newcommand{\bemph}[1]{{\normalfont#1}}
\newcommand{\ep}[1]{\bemph{(}#1\bemph{)}}

\newtheoremstyle{bfnote}{}{}{\slshape}{}{\bfseries}{\bfseries.}{ }{\thmname{#1}\thmnumber{ #2}\thmnote{ \ep{\normalfont{}#3}}}
\theoremstyle{bfnote}
\newtheorem{theo}{Theorem}[section]
\newtheorem*{theo*}{Theorem}

\newtheorem{lemma}[theo]{Lemma}
\newtheorem{claim}[theo]{Claim}

\newtheorem{conj}[theo]{Conjecture}

\newtheorem*{corl*}{Corollary}

\theoremstyle{definition}

\newtheorem*{defn*}{Definition}

\newtheorem*{exmp*}{Example}

\theoremstyle{remark}
\newtheorem*{ques*}{Question}
\newtheorem*{remk*}{Remark}

\crefname{theo}{theorem}{theorems}
\Crefname{theo}{Theorem}{Theorems}
\crefname{lemma}{lemma}{lemmas}
\Crefname{lemma}{Lemma}{Lemmas}
\crefname{claim}{claim}{claims}
\Crefname{claim}{Claim}{Claims}
\crefname{prop}{proposition}{propositions}
\Crefname{prop}{Proposition}{Propositions}
\crefname{corl}{corollary}{corollaries}
\Crefname{corl}{Corollary}{Corollaries}
\crefname{defn}{definition}{definitions}
\Crefname{defn}{Definition}{Definitions}
\crefname{remk}{remark}{remarks}
\Crefname{remk}{Remark}{Remarks}
\crefname{alg}{algorithm}{algorithms}
\Crefname{alg}{Algorithm}{Algorithms}

\newcommand*{\myproofname}{Proof}
\newenvironment{claimproof}[1][\myproofname]{\begin{proof}[#1]}{\end{proof}}

\makeatletter
\newcommand{\neutralize}[1]{\expandafter\let\csname c@#1\endcsname\count@}
\newcommand{\authorfootnote}[1]{%
  \begingroup
  \renewcommand{\@makefnmark}{}%
  \renewcommand{\@makefntext}[1]{\noindent ##1}%
  \footnotetext{#1}%
  \endgroup
}
\makeatother

\newcommand{\set}[1]{\{#1\}}
\newcommand{\N}{{\mathbb{N}}}

\newcommand{\R}{\mathbb{R}}

\newcommand{\D}{\mathcal{D}}

\renewcommand{\P}{\mathbb{P}}
\newcommand{\E}{\mathbb{E}}
\renewcommand{\epsilon}{\varepsilon}
\newcommand{\eps}{\varepsilon}
\renewcommand{\phi}{\varphi}
\renewcommand{\leq}{\leqslant}
\renewcommand{\geq}{\geqslant}
\newcommand{\defeq}{\coloneqq}

\newcommand{\emphd}[1]{{\fontseries{b}\selectfont\textsf{#1}}}

\newcommand{\indicator}{\mathbf{1}}

\newcommand{\dr}[1]{d_{(#1)}}
\newcommand{\T}{\mathcal{T}}

\DeclareMathOperator{\Safe}{Safe}
\DeclareMathOperator{\Surv}{Surv}
\numberwithin{equation}{section}

\titleformat{\section}[block]{\large\bfseries\sffamily}{\thesection.}{1ex}{}
\titleformat{\subsection}[block]{\bfseries\sffamily}{\thesubsection.}{1ex}{}
\titleformat{\subsubsection}[block]{\itshape}{\bfseries\upshape\sffamily\thesubsubsection.}{1ex}{}

\titlespacing*{\section}{0pt}{*3}{*1}
\titlespacing*{\subsection}{0pt}{*3}{*1}
\titlespacing*{\subsubsection}{0pt}{*2}{*1}

\titlecontents{section}[1.5em]{\smallskip}{\bfseries\thecontentslabel\hspace{1.02em}}{\bfseries}{\,\,\titlerule*[0.77pc]{}\bfseries\contentspage}
\titlecontents{subsection}[4em]{\smallskip}{\thecontentslabel\hspace{1.02em}}{\hspace*{2.32em}}{\,\,\titlerule*[0.77pc]{.}\contentspage}

\renewbibmacro{in:}{}
\renewbibmacro*{volume+number+eid}{\printfield{volume}\setunit*{\addnbspace}\printfield{number}\setunit{\addcomma\space}\printfield{eid}}
\DeclareFieldFormat[article]{volume}{\textbf{#1}\space}
\DeclareFieldFormat[article]{number}{\mkbibparens{#1}}
\DeclareFieldFormat{journaltitle}{#1,}
\DeclareFieldFormat[thesis]{title}{\mkbibemph{#1}\addperiod}
\DeclareFieldFormat[article, unpublished, thesis, incollection, inproceedings]{title}{\mkbibemph{#1},}
\DeclareFieldFormat[book]{title}{\mkbibemph{#1}\addperiod}
\DeclareFieldFormat[unpublished]{howpublished}{#1, }
\DeclareFieldFormat{pages}{#1}
\DeclareFieldFormat[article]{series}{Ser.~#1\addcomma}

\setlist{topsep=3pt,itemsep=3pt}

\makeatletter
\newenvironment{breakablealgorithm}
  {\begin{center}\refstepcounter{algorithm}\hrule height .8pt depth 0pt \kern 2pt
   \renewcommand{\caption}[2][\relax]{%
     {\raggedright\textbf{\fname@algorithm~\thealgorithm} ##2\par}%
     \ifx\relax##1\relax
       \addcontentsline{loa}{algorithm}{\protect\numberline{\thealgorithm}##2}%
     \else
       \addcontentsline{loa}{algorithm}{\protect\numberline{\thealgorithm}##1}%
     \fi
     \kern 2pt\hrule\kern 2pt}}
  {\kern 2pt\hrule\relax\end{center}}
\makeatother

\title{\sffamily On Independent Sets in Uncrowded Uniform Hypergraphs}
\date{}

\author[J. Yu]{Jing~Yu}
\author[J. Zhang]{Junchi~Zhang}

\begin{document}

\begin{abstract}
We prove an average-degree lower bound on the independence number of uncrowded uniform hypergraphs.
For every fixed $r\geq 2$ and every $\eta>0$,  there exists $d_*=d_*(r,\eta)$ such that any uncrowded $(r+1)$-uniform hypergraph $G$ with $n$ vertices and average degree $d\geq d_*$ satisfies
\[
    \alpha(G)\geq
    (1-\eta)r^{-1/r}\left(\frac{\log d}{d}\right)^{1/r}n.
\]
The proof combines a cleaning procedure, which reduces the maximum top-layer degree to the average scale, with a random nibble procedure that repeatedly extracts independent vertices while controlling all lower-order degrees created by the process.
After an initial top-layer cleaning, we run a trace nibble.  Since the residual hypergraph contains traces of all sizes $2,\ldots,r+1$, we track the maximum degrees in every layer. A binomial-type recurrence for this degree profile
yields the stated leading constant.
\end{abstract}

\maketitle
\authorfootnote{(JY) Shanghai Center for Mathematical Sciences, Fudan University, Shanghai, China; 
e-mail: \texttt{jyu@fudan.edu.cn}. Partially supported by the National Natural Science Foundation of China grants 12371343 and 12525110 (PI: Hehui Wu).\\[2pt]
(JZ) Shanghai Center for Mathematical Sciences, Fudan University, Shanghai, China; 
e-mail: \texttt{jczhang24@m.fudan.edu.cn}.}

\section{Introduction}

\subsection{Background and the main result}
The \emphd{independence number} $\alpha(H)$ of a hypergraph $H$ is the size of its largest independent set. 
For $(r+1)$-uniform hypergraphs with average degree $d$, Caro and Tuza~\cite{CT91} gave a general lower bound of order $n/d^{1/r}$.
In sparse graphs and hypergraphs, local restrictions often force substantially larger independent sets. The first fundamental graph example is the triangle-free case: Ajtai, Koml\'os and Szemer\'edi~\cite{Ajtai1980Ramsy} proved a logarithmic improvement, and Shearer~\cite{Shearer1983ind1} obtained the sharp bound that every triangle-free graph $G$ with average degree $d$ satisfies
\[
    \alpha(G)\geq (1-o_d(1))\frac{n\log d}{d}.
\]

The corresponding problem for uniform hypergraphs was initiated by Ajtai, Koml\'os, Pintz, Spencer and Szemer\'edi~\cite{Spencer1982hyper}, who considered independent sets in \emph{uncrowded} hypergraphs.
For $k\geq 2$, a \emphd{Berge $k$-cycle}, denoted by $C_k$, is a sequence of distinct vertices $x_1,\ldots,x_k$ and distinct edges $e_1,\ldots,e_k$ such that $\{x_j,x_{j+1}\}\subseteq e_j$ for every $j$, with indices modulo $k$.
A hypergraph is called \emphd{uncrowded} if it contains no $C_k$ for $2\leq k \leq 4$.
Ajtai, Koml\'os, Pintz, Spencer and Szemer\'edi proved that, for every fixed $r$, there is a constant $c_r>0$ such that every $(r+1)$-uniform uncrowded hypergraph $H$ of maximum degree $\Delta$ satisfies
\[
   \alpha(H)\geq  c_r\left(\frac{\log \Delta}{\Delta}\right)^{1/r}|V(H)|.
\]
Duke, Lefmann and R\"odl~\cite{Rodl1995hyper} later developed a robust \emph{nibble method} in this setting. 

Verstraete and Wilson~\cite{Verstraete2026hyper} recently gave a Shearer-type proof in the broader class of \emphd{locally sparse} uniform hypergraphs, namely hypergraphs with no $C_2$ and no $C_3$.  Their theorem extends the preceding maximum-degree bound from uncrowded hypergraphs to locally sparse hypergraphs and gives an explicit constant; in particular, their proof gives the coefficient $(1-o_r(1))(1-o_\Delta(1))/r$ as $r,\Delta\to\infty$.  Their proof belongs to the same general Shearer-type line as the work of Kostochka, Mubayi and Verstraete~\cite{KostochkaMubayiVerstraete2014Independent}.

Our main theorem is the following.

\begin{theo}\label{thm:main}
For every integer $r\geq 2$ and every $\eta>0$, there exists $d_*=d_*(r,\eta)$ such that the following holds. 
Let $H$ be an $n$-vertex uncrowded $(r+1)$-uniform hypergraph with average degree $d\geq d_*$.  Then
\[
    \alpha(H)\geq
    (1-\eta)r^{-1/r}\left(\frac{\log d}{d}\right)^{1/r}n.
\]
\end{theo}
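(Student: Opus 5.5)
We follow the strategy outlined in the abstract: first clean the hypergraph so that the maximum degree is of the order of the average degree, then run a nibble. The nibble tracks, for each $j=2,\ldots,r+1$, the maximum degree in the layer of $j$-element traces.

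\textbf{Step 1 (cleaning).} Fix a small $\delta=\delta(\eta,r)>0$. Delete every vertex of degree larger than $(1+\delta)d$; by Markov at most $n/(1+\delta)$ vertices go. That loss is too costly, so we instead use a weighted version. We pass to the subhypergraph $H'$ induced on vertices of degree at most $Kd$, which loses only $n/K$ vertices. Then we repeatedly discard high-degree vertices, so that the remaining $n'\ge(1-\delta)n$ vertices span a hypergraph whose top-layer maximum degree is at most $(1+\delta)d'$ with $d'\le d n/n'$.

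The real point is to compare $\alpha$ at the average scale. Since the target bound $(\log d/d)^{1/r}n$ is concave in the relevant sense, we will remove vertices of degree greater than $(1+\delta)d$ only after checking that this loses a $1-O(\delta)$ factor in the final bound. If this simple approach fails, I would use the standard trick of randomly sampling each vertex with probability $p$ and deleting edges, choosing the threshold so that the loss is absorbed. Uncrowdedness passes to subhypergraphs, so after Step 1 we may assume that $\Delta_{r+1}\le(1+\delta)D$ with $D\approx d$.

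\textbf{Step 2 (trace nibble).} We run rounds $i=0,1,\ldots$. In each round we pick every surviving vertex independently with probability $\epsilon/D_i^{(1)}$, with the normalisation chosen so that the expected number of picked vertices per edge is small. Picked vertices that lie in a trace of size $2$ entirely consisting of picked vertices are discarded (alteration). The rest are added to the independent set $I$. For each edge $e$, we replace it by its trace $e\setminus I$. Vertices that would complete an edge, i.e. whose $1$-trace would appear, are deleted.

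We track the degrees $D_i^{(j)}$, defined as the maximum number of $j$-traces containing a vertex, for $2\le j\le r+1$. In expectation a $j$-trace becomes a $(j-1)$-trace with probability about $(j-1)\cdot p$. This gives the binomial-type recurrence
\[
D^{(j)}_{i+1}\approx D^{(j)}_i(1-jp)+ (j+1)\,p\,D^{(j+1)}_i ,
\]
together with the survival probability of a vertex, $\exp(-\sum p\, D^{(2)})$. Solving the continuous limit gives $D^{(j)}(t)\approx\binom{r}{j-1}t^{r+1-j}D$ in the natural time scale. The process can run until $t^r D\approx \log D/r$ while the vertex density stays of order $1$. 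Integrating, the final set has size $(1-\eta)(\log d/(rd))^{1/r}n$; this is how the constant $r^{-1/r}$ arises.

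\textbf{Step 3 (concentration; main obstacle).} We must show that each degree $D^{(j)}_i(v)$ stays within a $(1+o(1))$ factor of its trajectory uniformly over all vertices and rounds. This is where uncrowdedness is used: the absence of $C_2,C_3,C_4$ ensures that the changes to traces at a fixed vertex depend on nearly disjoint sets of random choices. This allows Talagrand-type or bounded-differences concentration, and the Lovász Local Lemma handles the bad events. The main difficulty is the lower layers: small traces created by the process have small expected counts early on. There I would use upper-bound-only tracking, with error terms $D^{(j)}_i\le(1+\delta_i)\hat D^{(j)}_i$ where $\delta_i$ grows slowly, together with crude absolute bounds (polylog slack) when $\hat D^{(j)}_i$ is small. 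The aim is to verify that the accumulated errors over the $O(\epsilon^{-1}\log d)$ rounds remain $o(1)$ when $\epsilon\to0$ slowly. Finally, we add greedily any leftover independent vertices; this is only needed to absorb the $\eta$ loss.
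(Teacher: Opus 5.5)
Your overall architecture matches the paper's: cleaning, then a trace nibble that tracks every layer through a binomial-type recurrence. However, Step~1 does not work, and it is exactly where the average-degree hypothesis has to be paid for.

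The claim that repeated deletion leaves $n'\geq(1-\delta)n$ vertices with top-layer maximum degree at most $(1+\delta)d'$ is false. Take the disjoint union of a $\tfrac32 d$-regular and a $\tfrac12 d$-regular uncrowded hypergraph on $n/2$ vertices each. Deleting $\delta n$ vertices lowers the degree sum of the first component by at most $\tfrac32(r+1)\delta dn$. So every induced subhypergraph on $(1-\delta)n$ vertices has a vertex of degree at least $\tfrac32 d(1-2(r+1)\delta)$, while its average degree is at most $d/(1-\delta)$. Appeals to concavity or random sampling do not help, since plain alteration has no logarithmic factor.

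The missing idea is the monovariant $Q=|V|/d_r^{1/r}$. Deleting a vertex of degree larger than $(1+\theta)d_r$ removes more than $(r+1)(1+\theta)d_r$ from the degree sum, so $Q$ increases; indeed $Q_i\geq R_i^{\theta/3}n/d^{1/r}$ with $R_i=n/|V(H_i)|$ (Lemma~\ref{lem:cleaning}). Greedy deletion with $\theta=\eps_0/2$ therefore has three possible outcomes:
\begin{itemize}
\item it stops with $D_r\leq(1+\theta)d_r$, $d_r\geq d^{1-\eps_0/4}$ and $Q\geq n/d^{1/r}$;
\item it drives $d_r$ below $d^{1-\eps_0/4}$;
\item it drives $R_i$ above $d^{\eps_0/(10r)}$.
\end{itemize}
In the last two cases the crude bound $\alpha\geq C_r^{-1}|V|/d_r^{1/r}$ beats the target by a power of $d$. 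The quantity preserved is $n/d^{1/r}$, not $n$.

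Near-regularity is genuinely needed here. In this nibble the survival rate is governed by maximum degrees, so with $D_r=c\,d_r$ only about $(L/c)^{1/r}/h$ rounds are admissible (with $L=\log d^{1/r}$). You would then lose a factor $c^{-1/r}$.

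The same issue resurfaces in Steps~2--3. After cleaning you only have an upper bound on top-layer degrees, and some vertices may have degree far below average. So you cannot keep every top-layer degree within $1+o(1)$ of a single trajectory. Nor does the vertex density stay of order $1$: it drops polynomially in $d$ along with $d_{(k)}$.

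What stays essentially constant is $Q_k=n_k/d_{(k)}^{1/r}$. The paper enforces this with equalizing coins, so that every vertex survives with probability exactly $(1-p)\gamma$, where $\gamma=\prod_k(1-p^k)^{D_k}$. Talagrand then gives $n_{k+1}\approx\gamma n_k$ and $d_{(k+1)}\approx\gamma^r d_{(k)}$. Round $k$ contributes $p_kn_k=hQ_k$, and summing gives $|I|\approx ThQ_0$.

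Lower layers are controlled only through their maxima. The paper uses per-vertex Chernoff bounds, with independence coming from $C_2,C_3,C_4$-freeness, and then deletes the few exceptional vertices. This yields $c_{\ell,k+1}\leq(1+o(1))\sum_{m\geq\ell}\binom m\ell h^{m-\ell}c_{m,k}$ without any local lemma.

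Several further points in your nibble need correcting:
\begin{itemize}
\item You need the specific step size $h=\rho(\log d^{1/r})^{-(r-1)/r}$, with sampling rate $h\,d_{(k)}^{-1/r}$. Saying ``$\epsilon\to0$ slowly'' is not enough. The alteration loss in round $k$ is $\delta_k=\sum_\ell c_{\ell,k}h^\ell\approx r(kh)^{r-1}h$, and it must stay $O(\rho)$ up to $kh\approx L^{1/r}$.
\item The alteration must discard full traces of every size inside the sampled set, not only $2$-traces.
\item The inflow coefficient in your recurrence should be $jp$, not $(j+1)p$, because a $(j+1)$-trace at $v$ has $j$ other vertices. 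Only with $jp$ is $\binom{r}{j-1}t^{r+1-j}$ the solution.
\end{itemize}
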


The leading constant in \Cref{thm:main} comes from the large-girth random-greedy benchmark.  For $d$-regular $(r+1)$-uniform hypergraphs of large girth, Nie and Verstraete~\cite{NV21} analyzed the randomized greedy algorithm and showed that, when the girth is sufficiently large relative to $d$, it produces an independent set of expected size
\[
    (1-o_d(1))r^{-1/r}\left(\frac{\log d}{d}\right)^{1/r}n .
\]
This value is also discussed in the concluding remarks of Verstraete and Wilson~\cite{Verstraete2026hyper}.  The graph case of this large-girth random-greedy picture goes back to Gamarnik and Goldberg~\cite{GamarnikGoldberg2010Greedy}; related asymptotic results for regular graphs of large girth were obtained by Lauer and Wormald~\cite{LauerWormald2007LargeGirth}.
The contribution of \Cref{thm:main} is to obtain this benchmark constant in an average-degree theorem under the uncrowded assumption.

\subsection{Basic definitions and notation}
Throughout the rest of the paper we use the following basic notation. For $n \in \N$, we let $[n] \defeq \set{1, \ldots, n}$.
An \emphd{$(r+1)$-bounded hypergraph} is a hypergraph whose edges have sizes between $2$ and $r+1$, and an \emphd{$(r+1)$-uniform hypergraph} is one whose edges all have size exactly $r+1$.  For an $(r+1)$-bounded hypergraph $H$ and $1\leq i\leq r$, let $E_i(H)$ be the set of edges of size $i+1$ and $E(H)=\bigcup_{i=1}^r E_i(H)$.
For $v\in V(H)$, define 
\[
    d_{i,H}(v)=|\{e\in E_i(H):v\in e\}|\quad \text{and} \quad
    D_i(H)=\max_{v\in V(H)}d_{i,H}(v),
\]
and
\[
   d_i(H)=\frac{1}{|V(H)|}\sum_{v\in V(H)}d_{i,H}(v)
       =\frac{(i+1)|E_i(H)|}{|V(H)|}.
\]
When the hypergraph is clear from the context, we write $d_i$ and $D_i$.
If $H$ is $(r+1)$-uniform, then $d_r(H)$ is its usual average degree.

For two disjoint vertex sets $A,K\subseteq V(H)$, the \emphd{trace subhypergraph} $H[K|A]$ of $H$ on $K$ with respect to $A$ is defined as 
$$V( H[K|A] )= K,\qquad E(H[K|A])= \{e\cap K:e\in E(H),\ e\subseteq A\cup K,\ |e\cap K|\geq 2\}.$$
Thus $H[K|A]$ records the traces left on $K$ by edges of $H$ whose remaining vertices lie in the activated set $A$.
The \emphd{induced subhypergraph} of $H$ on $K$, denoted by $H[K]$, is defined by
$$ V(H[K])=K,\qquad E(H[K])=\{e\in E(H)\;:\; e\subseteq K \} . $$

For convenience, we write $x=(1\pm\beta)y$ if $(1-\beta)y\leq x\leq (1+\beta)y$.

\subsection{Proof overview}
We now give an outline of the proof of \Cref{thm:main}.

\emph{Initial cleaning.} We begin with the original $(r+1)$-uniform hypergraph $H_0=H$.  If $D_r(H_i)>(1+\eps_0/2)d_r(H_i)$, we delete a vertex of maximum degree. 
The key point is that the quantity
\[
    \frac{|V(H_i)|}{d_r(H_i)^{1/r}}
\]
increases during such deletions.  Hence, if the average degree drops too much, or if too many vertices are deleted, a simple alteration lemma already gives the desired independent set.  Otherwise the cleaning phase stops with a uniform hypergraph $H_{\mathrm{cl}}$ satisfying
\[
    D_r(H_{\mathrm{cl}})\leq \left(1+\frac{\eps_0}{2}\right)d_r(H_{\mathrm{cl}}),
    \qquad
    \frac{|V(H_{\mathrm{cl}})|}{d_r(H_{\mathrm{cl}})^{1/r}}
    \geq
    \frac{n}{d^{1/r}}.
\]

\emph{Trace nibble.}
After the cleaning step stops, we begin with the final output $H_0'=H_{\mathrm{cl}}$.
In round $k$ of the nibble phase, suppose that the current residual hypergraph is $H'_k$ and write $d_{(k)}=d_r(H'_k)$.  We sample a $p_k$-random set $A$ with
\[
    p_k=h d_{(k)}^{-1/r}
\]
for some small enough parameter $h$.
From $A$ we keep a large independent set $I_k$.  A vertex outside $A$ is allowed to survive only if no incident edge has all its remaining vertices in $A$; additional equalizing coin flips make the survival probability essentially uniform.  If $K$ is the set of surviving vertices, the next temporary residual object is the trace hypergraph $H'_k[K|A]$.  Then any independent set of this new trace hypergraph can be joined to $I_k$ to form an independent set in the previous hypergraph.

The trace hypergraph is then pruned: for each $1\leq \ell\leq r$, vertices who are incident to too many edges of size less than $r+1$ are deleted.  Concentration shows that the total effect of these exceptional sets is negligible and the normalized layer-degree coefficients
\[
    c_{\ell,k}:=\frac{D_\ell(H'_k)}{d_{(k)}^{\ell/r}},\qquad 1\leq \ell\leq r,
\]
satisfy a polynomial recurrence, up to a small multiplicative error, as follows:
\[
    c_{\ell,k+1}
    \leq
    \sum_{m=\ell}^r c_{m,k}\binom{m}{\ell}h^{m-\ell}.
\]
This yields
\[
    c_{\ell,k}\lesssim \binom r\ell(kh)^{r-\ell}.
\]

\emph{Accumulating the independent set.}  We take $h=\rho(\log d^{1/r})^{-(r-1)/r}$ and run
\[
    T=\left\lfloor(1-10\eps_0)\frac{(\log d^{1/r})^{1/r}}{h}\right\rfloor
\]
nibble rounds.  The degree-profile estimate guarantees the errors and concentration in the nibble process, while the vertex and top-layer degree estimates keep
\[
    \frac{|V(H'_k)|}{d_{(k)}^{1/r}}\sim \frac{|V(H'_0)|}{d_{(0)}^{1/r}}.
\]
Summing the independent sets extracted in the $T$ rounds gives
\[
    |I|\geq
    (1-20r\rho-11\eps_0)
    r^{-1/r}\left(\frac{\log d}{d}\right)^{1/r}n.
\]
Choosing $20r\rho+11\eps_0<\eta$ completes the proof.

\subsection{Relation to prior work}

Our proof is based on a clean-then-nibble implementation of the R\"odl nibble.  In each nibble round we sample a small random set of vertices, keep a large independent subset of it, and pass to a residual trace hypergraph.  The main quantitative task is an inductive track of the maximum number $D_\ell(H_k')$ of edges of size $\ell+1$ incident to a vertex for $1\leq \ell\leq r$.

Since our theorem is stated in terms of the average degree, we start with a cleaning procedure inspired by the clean-nibble method of Dhawan, Janzer and 
Methuku~\cite{DhawanJanzerMethuku2025} to reduce the original hypergraph to be almost regular.
This is also closely related to the maximum-degree-to-average-degree transfer principle of the authors' earlier work~\cite{YuZhang2026Transfer}, which isolates the 
cleaning mechanism to convert a maximum-degree independence bound into an average-degree one. 

Then in the nibbling process, we keep the almost regularity for edges of size $r+1$. 
The main hypergraph difficulty is that a nibble step does not preserve
uniformity.  Edges whose remaining vertices survive the nibble leave
traces of all sizes $2,\ldots,r+1$.  We therefore control the whole
profile of layer-degrees by considering
\[
    c_{\ell,k}:=\frac{D_\ell(H'_k)}{d_{(k)}^{\ell/r}},
    \qquad 1\leq \ell\leq r.
\]
The one-step trace nibble gives a binomial-type recurrence for this profile, and this recurrence is what keeps the leading constant intact.

%Independently and concurrently, Dhawan, Methuku and Vo~\cite{Dhawan2026independence} obtained the same constant for uncrowded uniform hypergraphs.  

\subsection{Structure of the paper}

The rest of the paper is organized as follows. \Cref{sec:lemmas} collects the auxiliary tools.  After the concentration inequalities, we prove a basic alteration lemma and show that trace subhypergraphs of uncrowded hypergraphs remain uncrowded.  The main result of that section is the one-step trace nibble lemma, which includes the degree-truncation step described above.  The section ends with the binomial propagation lemma for the layer-degree coefficients.
\Cref{sec:mainproof} proves \Cref{thm:main}. We first perform the initial top-layer cleaning, then run the multi-round trace nibble, control the evolution of the layer-degree profile, and sum the independent sets produced during the nibble phase.

\medskip
\noindent\textbf{Note added.}
After this paper was written, we learned that Abhishek Dhawan, Abhishek Methuku and Minh-Quan Vo~\cite{Dhawan2026independence} had independently and concurrently obtained a proof of \Cref{thm:main}.
Their proof also keeps track of all lower-order degrees generated by the process. Their key technical innovation is a carefully chosen damping factor, which slows the accumulation of lower-order traces and gives refined control on the vertex survival rate, allowing the nibble to run for sufficiently many rounds.  Their average-degree statement is then obtained by applying the transfer principle in \cite{YuZhang2026Transfer}. By contrast, our proof is designed directly for the average-degree theorem and gives a self-contained average-degree proof.

\section{Auxiliary lemmas}\label{sec:lemmas}

We will use a few standard concentration inequalities. The first one is Talagrand's inequality:
\begin{theo}[Talagrand's inequality \cite{MR14}]\label{thm:talagrand}
Let $X$ be a non-negative integer-valued random variable, not identically zero, and suppose that $X$ is determined by independent trials $\T_1,\ldots,\T_n$.
Assume that, for some $\mu,\rho>0$, the following two conditions hold.
\begin{enumerate}[label=\textup{(T\arabic*)}]
    \item Changing the outcome of any one trial $\T_i$ can change $X$ by at most $\mu$.
    \item For every integer $s\geq 1$, if $X\geq s$, then there is a set of at most $\rho s$ trials certifying that $X\geq s$.
\end{enumerate}
Then, for every $t\geq 0$,
\[
\P\left[
    |X-\E X|
    \geq t+20\mu\sqrt{\rho\E X}+64\mu^2\rho
\right]
\leq
4\exp\left(
    -\frac{t^2}{8\mu^2\rho(\E X+t)}
\right).
\]
\end{theo}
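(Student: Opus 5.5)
This statement is quoted from Molloy and Reed~\cite{MR14}, so we could simply cite it. If we wanted a self-contained argument, we would derive it from Talagrand's convex-distance inequality on the product space $\Omega=\prod_i\Omega_i$ of the trials. That inequality says that for every measurable $\mathcal A\subseteq\Omega$ and every $t\geq 0$,
\[
\P[\mathcal A]\,\P[d_T(\omega,\mathcal A)\geq t]\leq e^{-t^2/4}.
\]
Here $d_T(\omega,\mathcal A)=\sup_{\|\alpha\|_2=1}\inf_{y\in\mathcal A}\sum_{i:\omega_i\neq y_i}\alpha_i$ is the convex distance. We take this inequality as the black box; it is proved by induction on the number of coordinates using the Laplace transform of $d_T^2$.

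\emph{Step 1: a median bound.} Let $M$ be a median of $X$, and fix $a\geq 0$ and $u>0$. Set $\mathcal A=\{X\leq a\}$ and let $\omega$ satisfy $X(\omega)\geq a+u$, with $s=\lceil a+u\rceil$. By (T2) there is a certificate $C$ of at most $\rho s$ trials witnessing $X(\omega)\geq s$. Take $\alpha$ to be the normalized indicator of $C$, so $\alpha_i=|C|^{-1/2}$ for $i\in C$. Any $y\in\mathcal A$ must differ from $\omega$ on some set of trials in $C$. Resetting those trials of $\omega$ to their values in $y$ one at a time, each change moves $X$ by at most $\mu$ by (T1). Since $X(y)\leq a$ while every configuration agreeing with $\omega$ on $C$ has $X\geq s$, this forces the number of disagreements inside $C$ to be at least $u/\mu$. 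Hence
\[
d_T(\omega,\mathcal A)\geq \frac{u/\mu}{\sqrt{\rho s}}.
\]
Plugging into the convex-distance inequality gives $\P[X\leq a]\,\P[X\geq a+u]\leq \exp\bigl(-u^2/(4\mu^2\rho(a+u))\bigr)$. Taking $a=M$, and separately $a=M-u$, yields the two-sided bound
\[
\P[|X-M|\geq u]\leq 4\exp\bigl(-u^2/(8\mu^2\rho(M+u))\bigr).
\]

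\emph{Step 2: from the median to the mean.} We integrate the tail bound of Step 1 to get
\[
|\E X-M|\leq \E|X-M|=\int_0^\infty\P[|X-M|\geq u]\,du .
\]
Splitting the integral according to whether $u\leq M$ or $u>M$ gives $|\E X-M|\leq c_1\mu\sqrt{\rho M}+c_2\mu^2\rho$. Solving this self-referential bound also gives $M\leq 2\E X+O(\mu^2\rho)$. Substituting these two facts back into the median tail bound, with $u=t+|\E X-M|$, produces the stated inequality. The shift $20\mu\sqrt{\rho\E X}+64\mu^2\rho$ absorbs the median–mean gap, and the denominator $\E X+t$ replaces $M+u$.

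The main obstacle is Step 2: doing the bookkeeping carefully enough to land exactly on the constants $20$, $64$ and $8$. Step 1 follows from the certificate argument essentially verbatim, and the convex-distance inequality itself is a standard black box. Since only the shape of the bound matters for the later applications in this paper, we would follow the computation in~\cite{MR14} for the constants rather than optimize them.
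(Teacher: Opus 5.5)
Your plan matches the paper, which gives no proof of its own here and simply quotes the inequality from Molloy and Reed~\cite{MR14}; your optional sketch (convex-distance inequality, certificate argument for a median bound, then a median-to-mean transfer) is the standard derivation behind that citation. If you ever write Step~2 out, keep the exponent $u^2/(4\mu^2\rho(M+u))$ that your Step~1 actually yields rather than weakening it to $8$: combined with Markov's bound $M\leq 2\E X$ this gives $4\mu^2\rho(M+t)\leq 8\mu^2\rho(\E X+t)$ directly, and your tail integral then gives $|\E X-M|\leq 8\sqrt{\pi}\,\mu\sqrt{\rho\E X}+32\mu^2\rho$, comfortably inside the stated shift $20\mu\sqrt{\rho\E X}+64\mu^2\rho$.
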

We will also use the Chernoff bound: 
\begin{theo}[Chernoff bound]\label{thm:chernoff}
Let $X_1,\ldots,X_n$ be independent random variables taking values in
$\{0,1\}$, and put
\[
    X=\sum_{i=1}^n X_i,\qquad \mu=\E X.
\]
Then, for every $0<\delta\leq 1$,
\[
    \P(X\geq (1+\delta)\mu)
    \leq
    \exp\left(-\frac{\delta^2\mu}{3}\right),
\]
and
\[
    \P(X\leq (1-\delta)\mu)
    \leq
    \exp\left(-\frac{\delta^2\mu}{2}\right).
\]
\end{theo}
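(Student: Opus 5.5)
The statement immediately above is the classical Chernoff bound for a sum $X=\sum_i X_i$ of independent $\{0,1\}$-valued variables. I would prove it by the exponential moment method. Write $p_i=\P(X_i=1)$, so $\mu=\sum_i p_i$. For any $\lambda>0$, Markov's inequality applied to $e^{\lambda X}$ gives
\[
    \P(X\geq (1+\delta)\mu)\leq e^{-\lambda(1+\delta)\mu}\,\E e^{\lambda X}.
\]
Independence gives $\E e^{\lambda X}=\prod_i\bigl(1+p_i(e^\lambda-1)\bigr)$. Using $1+x\leq e^x$, this is at most $\exp\bigl((e^\lambda-1)\mu\bigr)$. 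The choice $\lambda=\log(1+\delta)$ then yields
\[
    \P(X\geq (1+\delta)\mu)\leq \left(\frac{e^{\delta}}{(1+\delta)^{1+\delta}}\right)^{\mu}.
\]
For the lower tail I would apply the same argument to $e^{-\lambda X}$ with $\lambda=-\log(1-\delta)$. This gives
\[
    \P(X\leq (1-\delta)\mu)\leq \left(\frac{e^{-\delta}}{(1-\delta)^{1-\delta}}\right)^{\mu}.
\]
For $\delta=1$ the lower tail is read with the convention $0^0=1$, or handled directly: $\P(X=0)=\prod_i(1-p_i)\leq e^{-\mu}\leq e^{-\mu/2}$.

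It remains to compare these exponents with the stated quadratic ones. For the upper tail I need
\[
    f(\delta)\defeq \delta-(1+\delta)\log(1+\delta)+\frac{\delta^2}{3}\leq 0
    \qquad\text{on } (0,1].
\]
I would check this by calculus. We have $f(0)=0$ and $f'(\delta)=-\log(1+\delta)+\tfrac{2\delta}{3}$. This derivative vanishes at $0$ and is concave. It is negative just to the right of $0$, since $f''(0)=-1+\tfrac23<0$, and $f'(1)=\tfrac23-\log 2<0$. A concave function with these values stays negative on $(0,1]$, so $f'\leq 0$ and hence $f\leq 0$ there.

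For the lower tail I need
\[
    g(\delta)\defeq -\delta-(1-\delta)\log(1-\delta)+\frac{\delta^2}{2}\leq 0
    \qquad\text{on } [0,1).
\]
Here $g(0)=0$ and $g'(\delta)=\log(1-\delta)+\delta\leq 0$. Alternatively, expanding $(1-\delta)\log(1-\delta)=-\delta+\sum_{k\geq2}\frac{\delta^k}{k(k-1)}$ shows directly that the exponent is at most $-\delta^2/2$.

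The only step needing any care is the elementary upper-tail inequality $f\leq 0$. The constant $3$ is tight enough near $\delta=1$ that one has to check the endpoint value $\tfrac23-\log 2<0$ explicitly. Everything else is standard.
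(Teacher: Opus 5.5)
The paper gives no proof of this bound; it quotes it as a standard tool. Your exponential-moment argument is the usual textbook proof and is correct. The moment generating function estimate, the choices $\lambda=\log(1+\delta)$ and $\lambda=-\log(1-\delta)$, the separate treatment of $\delta=1$, and the lower-tail comparison via $g'(\delta)=\log(1-\delta)+\delta\le 0$ (or the series expansion) all check out.

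One statement in the upper-tail comparison is wrong, although the conclusion survives. The function $f'(\delta)=\tfrac{2\delta}{3}-\log(1+\delta)$ is \emph{convex}, not concave, since $f'''(\delta)=(1+\delta)^{-2}>0$. The valid deduction is the chord bound
\[
f'(\delta)\le(1-\delta)f'(0)+\delta f'(1)=\delta\bigl(\tfrac23-\log 2\bigr)<0 \quad\text{for } \delta\in(0,1],
\]
and then $f\le f(0)=0$.

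This is exactly why the endpoint check at $\delta=1$ is essential. For a convex $f'$, the sign of $f''(0)$ alone does not stop $f'$ from turning positive before $1$. For example, replacing $\tfrac13$ by $0.4$ keeps $f''(0)<0$ but makes $f'(1)>0$. By contrast, if $f'$ really were concave, the tangent bound $f'(\delta)\le f''(0)\delta$ would make the check at $1$ unnecessary. So your two checks are the right ones, but the justification should invoke convexity.
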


The next lemma is the basic deletion bound used several times.

\begin{lemma}\label{lem:randomind}
Let $H$ be an $n$-vertex $(r+1)$-bounded hypergraph with $d=d_{r}(H)\geq 1$.
Suppose that $d_{i}(H) \leq c_i d^{i/r}$ for every $1\leq i\leq r$, with $c_i>0$ and $c_r=1$.
Then
\[
    \alpha(H)\geq
    \frac{n}{2d^{1/r}}\min\left\{1,\min_{1\leq i\leq r}(2rc_i)^{-1/i}\right\}.
\]
\end{lemma}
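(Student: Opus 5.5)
We use the standard alteration (deletion) method: pick a random vertex subset, then delete one vertex from every edge that survives inside it. Let $p\in(0,1]$ be a parameter, to be chosen of the form $p=\lambda d^{-1/r}$, and let $S$ be a $p$-random subset of $V(H)$. For each edge $e\subseteq S$ we delete one vertex of $e$ from $S$. What remains is an independent set of $H$, of size at least $|S|-\sum_{i=1}^r |\{e\in E_i(H): e\subseteq S\}|$. By linearity of expectation and $|E_i(H)|=d_i(H)n/(i+1)$,
\[
    \E\bigl[|S|-\#\{e\in E(H):e\subseteq S\}\bigr]
    = pn-\sum_{i=1}^r \frac{d_i(H)n}{i+1}p^{i+1}
    \geq pn\Bigl(1-\sum_{i=1}^r \frac{c_i d^{i/r}p^{i}}{i+1}\Bigr).
\]
So some choice of $S$ gives an independent set at least this large.

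Substituting $p=\lambda d^{-1/r}$ turns the bracket into $1-\sum_{i=1}^r c_i\lambda^i/(i+1)$. We take
\[
    \lambda=\min\Bigl\{1,\min_{1\le i\le r}(2rc_i)^{-1/i}\Bigr\}.
\]
Then $\lambda\le 1$, and since $d\ge 1$ we get $p\le 1$, so $p$ is a legitimate probability. For each $i$ we have $c_i\lambda^i\le 1/(2r)$, hence
\[
    \sum_{i=1}^r \frac{c_i\lambda^i}{i+1}\le \sum_{i=1}^r\frac{1}{2r(i+1)}\le \frac{1}{4}.
\]
This gives $\alpha(H)\ge \tfrac34\lambda n d^{-1/r}\ge \lambda n/(2d^{1/r})$, which is the claimed bound. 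The hypothesis $c_r=1$ is not actually needed; it only records that the top layer is normalized by $d$ itself. Since $\lambda\le(2r)^{-1/r}$, the inner minimum already dominates the $\min$ with $1$, but we keep the $1$ so that $p\le1$ is evident.

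No step here is a genuine obstacle; the argument is one computation. The only points to watch are that $p\le1$ holds, which is why the $\min$ with $1$ appears in the statement, and that the per-layer term comes out with the right exponent: the factor $p^{i+1}$ combined with $d_i\le c_i d^{i/r}$ leaves exactly $\lambda^{i+1}c_i$ after pulling out $pn$. The slack between $3/4$ and $1/2$ absorbs the crude bound $1/(i+1)\le1/2$, so nothing needs to be tuned.
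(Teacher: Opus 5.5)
Your proof is correct and is essentially the paper's argument. The shared steps are a $p$-random set with $p$ of order $d^{-1/r}\min_i(2rc_i)^{-1/i}$, deletion of one vertex per surviving edge, and the per-layer bound $c_i(pd^{1/r})^i\le 1/(2r)$. The differences are only bookkeeping: you put the $\min\{1,\cdot\}$ inside $\lambda$, whereas the paper takes $p=\min\{1,d^{-1/r}\min_i(2rc_i)^{-1/i}\}$ and uses $d\ge 1$ at the end; and you keep the factor $1/(i+1)$ to get $3/4$, where the paper drops it and gets exactly $1/2$.
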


\begin{proof}
Choose a random set $K\subseteq V(H)$ by keeping each vertex independently with probability
\[
    p=\min\left\{1,d^{-1/r}\min_{1\leq i\leq r}(2rc_i)^{-1/i}\right\}.
\]
Let $H'=H[K]$.
Deleting one vertex from each edge of $H'$ leaves an independent set, so
\[
    \alpha(H)\geq
    \E\bigl(|V(H')|-|E(H')|\bigr).
\]
For each $i$,
\[
    \E |E_i(H')|=p^{i+1}|E_i(H)|
    =\frac{d_i(H)p^{i+1}n}{i+1}.
\]
Hence
\begin{align*}
    \E\bigl(|V(H')|-|E(H')|\bigr)
    &= pn-\sum_{i=1}^r\frac{d_i(H)p^{i+1}n}{i+1} \\
    &\geq pn\left(1-\sum_{i=1}^r c_i(pd^{1/r})^i\right)\\
    &\geq pn\left(1-\sum_{i=1}^r c_i(2rc_i)^{-1}\right)\\
    &= \frac{pn}{2}.
\end{align*}
The choice of $p$ gives the desired bound.
\end{proof}

\begin{lemma}\label{lem:trace-uncrowded}
Let $H$ be uncrowded, let $A,K\subseteq V(H)$ be disjoint, and let $H_1=H[K|A]$. Then every induced subhypergraph of $H_1$ is uncrowded.
\end{lemma}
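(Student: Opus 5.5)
The plan is to lift any forbidden cycle in an induced subhypergraph of $H_1$ back to a cycle of the same length in $H$. First I reduce to $H_1$ itself. If $K'\subseteq K$, then every edge of $H_1[K']$ is an edge of $H_1$, and distinct edges of $H_1[K']$ are distinct edges of $H_1$. A Berge cycle uses only distinct vertices and distinct edges with the incidence conditions, so a $C_k$ in $H_1[K']$ is already a $C_k$ in $H_1$. Hence it suffices to show that $H_1$ contains no $C_k$ for $2\leq k\leq 4$.

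Now suppose $x_1,\ldots,x_k$ and $f_1,\ldots,f_k$ form a Berge $k$-cycle in $H_1$, with $2\leq k\leq 4$. Each $f_j$ is an edge of $H_1$, so we may choose an edge $e_j\in E(H)$ with $e_j\subseteq A\cup K$ and $e_j\cap K=f_j$. Then $\{x_j,x_{j+1}\}\subseteq f_j\subseteq e_j$. The vertices $x_j$ are distinct by assumption. The chosen edges $e_j$ are also distinct: if $e_j=e_{j'}$, then $f_j=e_j\cap K=e_{j'}\cap K=f_{j'}$, which forces $j=j'$. So $x_1,\ldots,x_k$ and $e_1,\ldots,e_k$ form a Berge $k$-cycle in $H$, contradicting the assumption that $H$ is uncrowded.

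The only point needing care is the injectivity of the lifting map $f\mapsto e$. Different edges of $H$ can leave the same trace on $K$, so the choice of preimage matters. However, the cycle in $H_1$ uses distinct traces, and distinct traces always come from distinct edges of $H$, which is the direction the argument needs. Everything else is a direct check of the definitions.
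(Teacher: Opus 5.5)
Your proof is correct, and it takes a cleaner route than the paper's.

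Both arguments start the same way. You lift each trace edge $f$ to a fixed preimage $e\in E(H)$ with $e\cap K=f$, and you note that distinct traces have distinct preimages. The paper then treats $k=2,3,4$ separately. For each $k$ it does a case analysis on whether the lifted edges have intersections beyond the designated vertices. In the degenerate cases it produces a $C_2$, or for $k=4$ possibly a $C_3$ through a non-consecutive intersection; otherwise it produces the corresponding $C_k$.

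You instead observe that the paper's definition of a Berge cycle asks only for distinct vertices, distinct edges and $\{x_j,x_{j+1}\}\subseteq e_j$. It places no restriction on further intersections among the edges. So the lifted sequence is already a Berge $C_k$ in $H$, and the case analysis is unnecessary.

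Your version is uniform in $k$ and proves slightly more. For each individual $k$, the absence of a Berge $C_k$ in $H$ passes to every induced subhypergraph of $H[K|A]$. This does not use the fact that the other cycle lengths are also excluded.

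The paper's case analysis would only be needed under a stricter notion of cycle, where consecutive edges meet only in the designated vertex and non-consecutive edges are disjoint. There, extra intersections among the lifted edges would genuinely have to be converted into shorter forbidden cycles. With the Berge definition actually used in the paper, your argument is complete.
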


\begin{proof}
Passing to an induced subhypergraph only deletes vertices and edges, so it is enough to prove the assertion for $H_1$. For each trace edge $f\in E(H_1)$, fix an original edge $e\in E(H)$ with $f=e\cap K$.  Distinct trace edges are assigned distinct original edges, since one original edge has only one trace.

If two traces $f_1=e_1\cap K$ and $f_2=e_2\cap K$ meet in at least two vertices, then the original edges $e_1,e_2$ meet in at least two vertices. This is a $C_2$ in $H$, impossible.

Suppose that $f_1,f_2,f_3$ form a $C_3$ in $H_1$. Thus, with cyclic indices, $f_i\cap f_{i+1}=\{x_i\}$ and the vertices $x_1,x_2,x_3$ are distinct.
The original edges $e_1,e_2,e_3$ contain the same three intersections. If some pair $e_i,e_j$ has an additional common vertex, then that pair gives a $C_2$ in $H$. If no such additional intersection exists, then $e_1,e_2,e_3$ form a forbidden $C_3$ in $H$. Both alternatives are impossible.

Finally suppose that $f_1,f_2,f_3,f_4$ form a $C_4$ in $H_1$, with cyclic indices. Let $x_i\in f_i\cap f_{i+1}$ be the four distinct trace intersection vertices. The original edges $e_i$ contain the same vertices $x_i$. If a consecutive pair $e_i,e_{i+1}$ has an additional intersection besides $x_i$, then those two original edges form a  $C_2$. If a non-consecutive pair has an intersection $y$, say $y\in e_1\cap e_3$, then either $y$ coincides with one of the $x_i$, again giving two original edges with two common vertices, or the three edges $e_1,e_2,e_3$ form a $C_3$ through $x_1,x_2,y$ after deleting any case that already yields a $C_2$. The same argument applies to the other non-consecutive pair $e_2,e_4$. If no extra intersection occurs, then $e_1,e_2,e_3,e_4$ form a  $C_4$ in $H$. Thus a trace $C_4$ always lifts to a forbidden $C_4$ or to a shorter forbidden cycle in $H$, a contradiction.
\end{proof}

For the nibbling procedure in the proof of \Cref{thm:main}, we apply the following algorithm.

\begin{breakablealgorithm}
\caption{One-step nibble with degree truncation}\label{algorithm:one-step-nibble}
\begin{flushleft}
\textbf{Input}: An uncrowded $(r+1)$-bounded hypergraph $H$, the top-layer average degree $d=d_r(H)$ and the $i$-th-layer maximum degree $D_i=D_i(H)=c_i d^{i/r}$, $1\leq i\leq r$, and a parameter $\eps$.\\
\textbf{Output}: An independent set $I$ and an induced trace subhypergraph $H''$.
\end{flushleft}
\begin{enumerate}[itemsep=.16cm]
    \item Set
    \[
        p:=\eps d^{-1/r},\qquad \gamma:=\prod_{k=1}^r(1-p^k)^{D_k}.
    \]
    \item Choose a $p$-random set $A\subseteq V(H)$ and let $I$ be a largest independent set in $H[A]$.
    \item For every vertex $v$, independently expose an equalizing coin $\xi_v$ with
    \[
        \P(\xi_v=1)=\frac{\gamma}{\prod_{e\ni v}(1-p^{|e|-1})}.
    \]
    Let $\Safe(v)$ be the event that $v\notin A$ and no edge $e\ni v$ satisfies $e\setminus\{v\}\subseteq A$, and define
    \[
        K:=\{v:\Safe(v)\text{ holds and }\xi_v=1\}.
    \]
    \item Form the trace hypergraph $H'=H[K|A]$.
    \item Truncate the exceptional vertices in each layer as follows: For $1\leq i\leq r$, put
    \[
        \sigma_i:=\gamma^i d^{i/r}\sum_{k=i}^r c_k\binom{k}{i}\eps^{k-i}
    \]
    and delete all vertices
    \[
        S_i:=\left\{v\in K: d_{i,H'}(v)>\left(1+\frac1{20}d^{-\eps/10}\right)\sigma_i \right\}.
    \]
    Set $S:=\bigcup_{i=1}^rS_i$ and
    \[
        H'':=H'[K\setminus S].
    \]
    \item Return $(I,H'')$.
\end{enumerate}
\end{breakablealgorithm}
\vspace{0.2cm}

We prove that with high probability, the nibble procedure gives a large independent set and a subhypergraph with the required properties, and thus the following one-step nibble lemma holds.

\begin{lemma}\label{lem:nibble}
Fix an integer $r\geq 2$ and $\lambda\in(0,1/(10r))$.  There exists $\bar{d}=\bar{d}(r,\lambda)$ such that the following holds.  
Let $H$ be an $n$-vertex uncrowded $(r+1)$-bounded hypergraph and write $d=d_{r}(H)$ and $D_i=D_{i}(H)$.  Suppose that
\[
    D_i=c_i d^{i/r},\qquad 1\leq i\leq r.
\]
Let $\eps$ be a number satisfying
\[
    \lambda(\log d^{1/r})^{-(r-1)/r}\leq \eps\leq \frac{1}{10r},
\]
and set
\[
    p=\frac{\eps}{d^{1/r}},\qquad \gamma=\prod_{k=1}^r (1-p^k)^{D_k},\qquad \delta=\sum_{i=1}^r c_i\eps^i .
\]
If
\[
    d\geq \bar{d},
    \qquad n\geq d^7,
    \qquad \delta\leq \frac{1}{100r},\qquad c_r<1+\dfrac{1}{4r},
    \qquad c_i<d^{1/r},\quad (1 \leq i < r),
\]
then there is an independent set $I\subseteq V(H)$ and an uncrowded induced trace subhypergraph $H''$ with $V(H'')\subseteq V(H)\setminus I$ such that the following statements hold.
\begin{enumerate}
    \item For every independent set $I'$ of $H''$, the set $I\cup I'$ is independent in $H$;
    \item  $|I|\geq (1-3\eps-3\delta)pn$;
    \item  $|V(H'')|=(1\pm d^{-\eps/10})\gamma(1-p)n$ and
    \[
        d_{r}(H'')=\left(1\pm \frac{1}{20}d^{-\eps/10}\right)d\gamma^r;
    \]
    \item For every $1\leq i\leq r$,
    \[
        \frac{D_{i}(H'')}{{d_{r}(H'')}^{i/r}} \leq (1+d^{-\eps/10})\sum_{k=i}^r c_k\binom{k}{i}\eps^{k-i}.
    \]
\end{enumerate}
\end{lemma}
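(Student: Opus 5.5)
We run \Cref{algorithm:one-step-nibble} on $H$ with the given $\eps$. We show that each of the four conclusions fails with probability $o(1)$; a union bound then gives an outcome satisfying all of them.

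\emph{Conclusion (1).} Take an edge $e$ of $H$ contained in $I\cup I'$. If $e\subseteq A$, this contradicts the independence of $I$ in $H[A]$. Otherwise $e$ meets $K\setminus S$ and $e\subseteq A\cup K$. If $|e\cap K|\geq 2$, then $e\cap K$ is an edge of $H'$, and it lies inside $I'$, which is impossible. If $|e\cap K|=1$, say $e\cap K=\{v\}$, then $e\setminus\{v\}\subseteq A$ contradicts $\Safe(v)$. Uncrowdedness of $H''$ follows from \Cref{lem:trace-uncrowded}. The coin probabilities are at most $1$ because $\prod_{e\ni v}(1-p^{|e|-1})\geq\prod_k(1-p^k)^{D_k}=\gamma$.

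\emph{Conclusion (2).} We have $|I|\geq |A|-|E(H[A])|$. By Chernoff, $|A|\geq(1-\eps)pn$ with high probability. The expected number of edges is
\[
\E|E(H[A])|=\sum_i p^{i+1}d_in/(i+1)\leq pn\sum_i c_i\eps^i=\delta pn .
\]
Markov's inequality alone is too weak here, so we need concentration. Each vertex changes $|E(H[A])|$ by at most $\sum_i D_i$, which is only polynomial in $d$, and $n\geq d^7$. Chebyshev's inequality, using that the covariance between edges comes only from edges sharing vertices, suffices to get $|E(H[A])|\leq(2\delta+\eps)pn$ with high probability. This gives $|I|\geq(1-3\eps-3\delta)pn$.

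\emph{Conclusion (3), vertex count.} By construction, $\P(v\in K)=(1-p)\gamma$ exactly. The events $\Safe(v)$ and $\Safe(w)$ are dependent only when $v,w$ are within distance $2$. So $\mathrm{Var}|K|=O(n\,\mathrm{poly}(d))$, and Chebyshev gives $|K|=(1\pm d^{-\eps/5})\gamma(1-p)n$. It then remains to show $|S|\leq d^{-\eps/5}\gamma n$, which we obtain from the per-vertex degree bounds below.

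\emph{Degree estimates.} This is the main step. Fix $v$ and $1\leq i\leq r$. A size-$(i+1)$ trace at $v$ arises from an edge $e\ni v$ of size $k+1$, with $k\geq i$: exactly $k-i$ of its other vertices lie in $A$, and the remaining $i$ vertices lie in $K$. Conditioned on $v\in K$, the probability of this is approximately
\[
\binom{k}{i}p^{k-i}\bigl(\gamma(1-p)\bigr)^i .
\]
The approximation uses uncrowdedness: there are no $C_2,C_3,C_4$, so the survival events of the vertices of $e$ depend on essentially disjoint sets of edges. Any correlation comes from the at most polynomially many edges through $v$ and the $A$-vertices, and these contribute only a factor $1+O(p)$. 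Summing over $k$ and over the $D_k=c_kd^{k/r}$ edges gives
\[
\E[d_{i,H'}(v)\mid v\in K]\leq(1+O(\eps d^{-1/r}))\,\sigma_i .
\]
For concentration we apply Talagrand's inequality (\Cref{thm:talagrand}) to the count. The trials are the memberships in $A$ and the coins of the vertices at distance at most $2$ from $v$. Uncrowdedness ensures each trial affects $O(1)$ counted traces, so $\mu=O(1)$. Certificates consist of the relevant $A$-vertices together with the safety witnesses, so $\rho=O(r)$. Since $\sigma_i\geq\gamma^id^{i/r}c_i\eps^{r-i}$, the mean is at least $d^{\Omega(1/r)}$, and we obtain a deviation of at most $\frac1{40}d^{-\eps/10}\sigma_i$ with failure probability $\exp(-d^{\Omega(1)})$. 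Here $d^{-\eps/10}\geq d^{-1/(100r)}$, so the error is polynomially small. One subtlety is that non-membership in $K$ is not monotone. We handle this as is standard, by splitting the count into the number of candidate traces minus the number of spoiled ones, and applying Talagrand to each part.

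It follows that $\P(v\in S)\leq \exp(-d^{c})$, so $\E|S|$ is negligible, and Markov's inequality gives the bound on $|S|$ needed above. For the top layer, the same first-moment computation with $k=i=r$ gives
\[
\E|E_r(H')|=|E_r(H)|\bigl(\gamma(1-p)\bigr)^{r+1}(1+o(1)).
\]
Note that $d_r(H'')$ also receives trace contributions from edges of size $r+1$ only. Combining this with Chebyshev and with $|V(H'')|$ yields $d_r(H'')=(1\pm\frac1{20}d^{-\eps/10})d\gamma^r$, where the factors $(1-p)^r$ are absorbed into the error.

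\emph{Conclusion (4).} After truncation,
\[
D_i(H'')\leq(1+\tfrac1{20}d^{-\eps/10})\sigma_i .
\]
Dividing by $d_r(H'')^{i/r}\geq(1-\frac1{20}d^{-\eps/10})^{i/r}d^{i/r}\gamma^i$ gives the bound
\[
\frac{D_i(H'')}{d_r(H'')^{i/r}}\leq(1+d^{-\eps/10})\sum_{k\geq i}c_k\binom{k}{i}\eps^{k-i}.
\]

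The expected obstacle is the Talagrand step. It requires verifying the Lipschitz and certificate conditions despite the non-monotone survival events. It also requires checking that the means are large enough, uniformly for all $\eps\geq\lambda(\log d^{1/r})^{-(r-1)/r}$, using $c_i<d^{1/r}$ to bound $\mu$ and the error terms.
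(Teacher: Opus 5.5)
Your proposal is correct in outline and has the same architecture as the paper. That means the same algorithm, exact first moments from the equalizing coins and uncrowdedness, conclusion (4) enforced deterministically by the truncation, and $|S|$ controlled by a per-vertex tail bound plus Markov. The concentration tools, however, are essentially swapped relative to the paper.

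\emph{Global counts.} For $|E(H[A])|$, $|K|$ and $|E_r(H')|$ you use Chebyshev. Dependencies occur only at bounded distance, so the variances are $O(n\,\mathrm{poly}(d))$, and $n\ge d^7$ makes each failure probability $O(\mathrm{poly}(d)/n)=o(1)$. That is all an existence argument needs. The paper instead uses Talagrand's inequality here and gets $\exp(-\Omega(n/d^{6}))$.

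\emph{Per-vertex tail.} For $d_{i,H'}(v)$ you use Talagrand with a candidate-minus-spoiled split. The paper instead shows (\Cref{claim:edge-level-independence}) that, once $v$'s own trials are removed, the contributions $Z_e(v)$ of distinct edges $e\ni v$ depend on pairwise disjoint trial sets. They are therefore independent Bernoulli variables, and Chernoff finishes. Your justification of $\mu=O(1)$ is exactly this disjointness: a trial other than $v$'s touches at most one edge through $v$, by the absence of $C_2,C_3,C_4$. So once you have it, the split and the certificate bookkeeping are unnecessary. Your route is heavier at this step, but it would survive if only bounded overlap, rather than disjointness, were available.

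Several details need fixing.

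\begin{itemize}
\item The per-vertex count must be a function of independent trials that does not involve $(X_v,\xi_v)$. You also cannot condition on $v\in K$ inside Talagrand, because conditioning on $\Safe(v)$ correlates the trials in each $e\setminus\{v\}$. Moreover, $X_v$ alone can switch the whole count: for $i=1$ and $W\subseteq A$ it decides the safety of $u$ with respect to $e$. Use instead, as the paper does, the indicator that $W\subseteq A$ and each $u\in U$ survives all edges other than $e$ (the event $M^*(e,v,U)$). This dominates $d_{i,H'}(v)$ on $\{v\in K\}$.

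\item Your heuristic for the conditional trace probability is vague; the talk of correlation from ``polynomially many edges'' is off. By the absence of $C_2$ and $C_3$ the probability is exact, $\binom{k}{i}\frac{(1-p)^i\gamma^i}{(1-p^k)^{i+1}}p^{k-i}$, and the only correlation comes from $e$ itself.

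\item Your bound $\sigma_i\ge\gamma^id^{i/r}c_i\eps^{r-i}$ is misstated and can be vacuous, since in the first nibble round $c_i=0$ for $i<r$. Take the $k=r$ term instead: $\sigma_i\ge\gamma^id^{i/r}\binom{r}{i}c_r\eps^{r-i}$ with $c_r\ge 1$. Also, what must be at least $d^{\Omega(1/r)}$ is this threshold, not the mean of $d_{i,H'}(v)$, which can be small for low-degree $v$.

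\item The sentence giving $\rho=O(r)$ with ``safety witnesses'' is inconsistent, because witnesses cost up to $\Delta$ trials per vertex. That cost is precisely why the split is needed.

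\item When passing from $H'$ to $H''$, subtract the at most $(r+1)\Delta|S|$ top-layer incidences destroyed by deleting $S$. This is negligible, but it should be accounted for.
\end{itemize}
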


\begin{proof}
Let $\kappa_1,\ldots,\kappa_6>0$ denote constants depending only on $r$ and $\lambda$. We choose $\bar{d}=\bar{d}(r,\lambda)$ sufficiently large once and for all. Throughout the proof, every estimate of the form ``for $d$ sufficiently large'' is meant uniformly over all
\[ \eps\in\left[\lambda(\log d^{1/r})^{-(r-1)/r},\, \frac1{10r}\right].
\]
By increasing $\bar{d}=\bar{d}(r,\lambda)$, we may assume throughout that
\[
p=\eps d^{-1/r}\leq d^{-\eps/20}\leq \frac1{10^6r} \quad\text{and}\quad \eps\geq \lambda d^{-1/r}.
\]
Indeed, the first inequality follows from $\eps\leq 1/(10r)$, while the latter two follow from $\eps\geq \lambda(\log d^{1/r})^{-(r-1)/r}$.
We shall also use the following simple lower bound on $\gamma$.
Applying $1-x\geq e^{-\frac{x}{1-x}}$ for $x\in (0,1)$ 
with $x=p^k$, we get
\[
    1-p^k\geq \exp\left(-\frac{p^k}{1-p^k}\right).
\]
Therefore, using $D_k=c_kd^{k/r}$,
\[
\begin{aligned}
    \gamma &= \prod_{k=1}^r (1-p^k)^{D_k}  \\
    &\geq \exp\left(-\sum_{k=1}^r D_k\frac{p^k}{1-p^k}\right) \\
    &=\exp\left(-\sum_{k=1}^r c_k d^{k/r}\frac{p^k}{1-p^k}\right).
\end{aligned}
\]
Since $p=\eps d^{-1/r}$, we have
\[
    d^{k/r}p^k=\eps^k.
\]
Moreover, for every $k\in[r]$, $p^k\leq p$, and hence
\[
    1-p^k\geq 1-p.
\]
It follows that
\[
    \sum_{k=1}^r c_k d^{k/r}\frac{p^k}{1-p^k}
    =\sum_{k=1}^r c_k\frac{\eps^k}{1-p^k}
    \leq \frac1{1-p}\sum_{k=1}^r c_k\eps^k
    =\frac{\delta}{1-p}.
\]
Since we have chosen $\bar{d}$ so large that $p\leq 1/2$, we have
\[
    \frac1{1-p}\leq 2.
\]
Consequently,
\begin{equation}\label{eq:lowerboundforgamma}
    \gamma \geq \exp\left(-\frac{\delta}{1-p}\right) \geq e^{-2\delta}.
\end{equation}
Let
\[
\Delta:=\max_{v\in V(H)}\sum_{i=1}^r d_{i, H}(v)
\]
be the total maximum degree of $H$. Since $c_i<d^{1/r}$ for $i<r$, we have
\[
D_i=c_i d^{i/r}<d^{(i+1)/r}\leq d\qquad (i<r),
\]
while $D_r=c_r d<2d$. Hence
\[
\Delta\leq \sum_{i=1}^r D_i < (r-1)d+2d=(r+1)d.
\]

Choose a random set $A\subseteq V(H)$ by including each vertex independently with probability $p$.  Let $X_v$ be the indicator of the event $v\in A$, and let $I$ be a largest independent set in $H[A]$.

We first show that $I$ is large.  By the Chernoff bound,
\[
    \P\bigl(|A|<(1-\eps)pn\bigr)
    \leq \exp\left(-\frac{\eps^2pn}{2}\right)\leq \exp \left(  -\dfrac{\lambda^3n}{2d^4}  \right).
\]
Let $Y=|E(H[A])|$. Then $Y$ is determined by the independent trials $(X_v)_{v\in V(H)}$. Changing one trial changes $Y$ by at most $\Delta$. If $Y\geq s$, then it is enough to reveal all vertices of $s$ edges present in $H[A]$, so at most $(r+1)s$ trials certify $Y\geq s$. Moreover,
\[
    \E Y = \sum_{i=1}^r p^{i+1}|E_i(H)| = np\sum_{i=1}^r \frac{d_{i}(H)p^i}{i+1} \leq np\sum_{i=1}^r c_i\eps^i = \delta pn.
\]
Let
\[
B:=20\Delta\sqrt{(r+1)\E Y}+64\Delta^2(r+1).
\]
Since $\Delta\leq (r+1)d$, $\E Y\leq \delta pn\leq pn$, $p=\eps d^{-1/r}$, $n\geq d^7$, and $\eps\geq \lambda(\log d^{1/r})^{-(r-1)/r}$, we have
\[
    \frac{B}{\eps pn}=o_{r,\lambda}(1).
\]
Thus, by increasing $\bar d$, we may assume that
\[
    B<\eps pn.
\]

Now put $t=(2\delta+\eps)pn$. Since $\E Y\leq \delta pn$, the event
\[
Y>(2\eps+3\delta)pn
\]
implies
\[
Y>\E Y+t+B.
\]
Talagrand's inequality therefore gives
\[
\P\bigl(Y>(2\eps+3\delta)pn\bigr)\leq 4\exp\left(-\frac{(2\delta+\eps)^2p^2n^2}{8\Delta^2(r+1)(4\delta+2\eps)pn}\right).
\]
Using $\Delta\leq (r+1)d$, $p=\eps d^{-1/r}$, and
$\eps\geq \lambda d^{-1/r}$, the exponent is at least
\[
\frac{\lambda^2n}{16(r+1)^3d^5}.
\]
Hence
\[
\P\bigl(Y>(2\eps+3\delta)pn\bigr) \leq 4\exp\left(-\frac{\lambda^2n}{16(r+1)^3d^5}\right).
\]
By the union bound, the two estimates for $|A|$ and $Y$ fail with probability at most
\[
\exp\left(-\frac{\lambda^3n}{2d^4}\right)
+ 4\exp\left(-\frac{\lambda^2n}{16(r+1)^3d^5}\right)
\leq 5\exp\left(-\frac{\lambda^3n}{16(r+1)^3d^5}\right).
\]
Deleting one vertex from each edge of $H[A]$ leaves an independent set. 
Therefore, with probability at least
\[
    1-5\exp\left(-\frac{\lambda^3 n}{16(r+1)^3d^5}\right),
\]
we have
\[
    |I|=\alpha(H[A])
    \geq |A|-Y
    \geq (1-3\eps-3\delta)pn.
\]
Fix such a choice of $I$ whenever this event occurs.

Next we construct $H''$.  For every $v\in V(H)$, let $\xi_v$ be an independent Bernoulli random variable with
\[
    \P(\xi_v=1) = \frac{\gamma}{\prod_{e\ni v}(1-p^{|e|-1})}.
\]
This is well-defined because
\[
    \prod_{e\ni v}(1-p^{|e|-1}) \geq \prod_{k=1}^r (1-p^k)^{D_k} = \gamma.
\]
For $v\in V(H)$, let $\Safe(v)$ be the event that $v\notin A$ and there is no edge $e\in E(H)$ such that $v\in e$ and $e\setminus\{v\}\subseteq A$.  Define
\[
    K=\{v\in V(H): \Safe(v)\text{ holds and }\xi_v=1\}.
\]
Since $H$ has no $C_2$, the sets $e\setminus\{v\}$ are pairwise disjoint as $e$ ranges over the edges containing $v$.  Therefore, for every $v\in V(H)$,
\[
    \P(v\in K)=(1-p)\prod_{e\ni v}(1-p^{|e|-1})\frac{\gamma}{\prod_{e\ni v}(1-p^{|e|-1})}=(1-p)\gamma.
\]
Hence
\[
    \E |K|=(1-p)\gamma n.
\]

We shall use Talagrand's inequality to concentrate $|K|$.  The random variable $|K|$ is determined by the independent trials
\[
    \T_v=(X_v,\xi_v),\qquad v\in V(H).
\]
Changing one trial $\T_u$ can affect membership in $K$ only for $u$ and for vertices sharing an edge with $u$; hence it changes $|K|$ by at most $1+r\Delta$.  If $|K|\geq s$, then for each certified vertex $v\in K$ it is enough to reveal $(X_v,\xi_v)$, and one witness $u\in e\setminus\{v\}$ with $X_u=0$ for every edge $e\ni v$. This certifies both $v\notin A$, $\xi_v=1$, and the safety condition for $v$.  Thus $|K|\geq s$ has a certificate of size at most $(\Delta+1)s$.  Applying Talagrand's inequality gives
\[
    \P\left(\bigl||K|-(1-p)\gamma n\bigr|> \frac{1}{100}d^{-\eps/10}(1-p)\gamma n \right) \leq \exp\left(-\frac{\kappa_1\gamma n}{d^{4}}\right).
\]
Since $\gamma\geq e^{-2\delta}\geq e^{-r}$, with probability at least
$1-\exp(-\kappa_2n/d^4)$,
\[
    |K|=\left(1\pm \frac{1}{100}d^{-\eps/10}\right)(1-p)\gamma n.
\]

Set the trace subhypergraph $H'=H[K|A]$.
By \Cref{lem:trace-uncrowded}, every induced subhypergraph of $H'$ is uncrowded.
We shall eventually take $H''=H'[K\setminus S]$ for a small exceptional set $S$.

\begin{claim}\label{claim:subhypergraphindependent}
    For any induced subhypergraph $H_1$ of $H'$ and independent set $I_1$ in $H_1$, $I\cup I_1$ is independent in $H$.
\end{claim}

\begin{claimproof}
    Suppose that an edge $e\in E(H)$ satisfies $e\subseteq I\cup I_1$.  If $e\cap K=\varnothing$, then $e\subseteq I$, contradicting the choice of $I$.  If $|e\cap K|=1$, then the unique vertex in $e\cap K$ is not safe, a contradiction. If $|e\cap K|\geq 2$, then $e\cap K\subseteq I_1\subseteq V(H_1)$, and since $H_1$ is an induced subhypergraph of $H'$, the set $e\cap K$ is an edge of $H_1$, contradicting that $I_1$ is independent.
\end{claimproof}

We now compute the expected degree condition. Fix $1\leq i\leq k\leq r$, an edge $e\in E_k(H)$ and a vertex $v\in e$.
For $x\in e$, let $\Surv(x,e)$ be the event that $X_x=0$, $\xi_x=1$, and that for every edge $f\in E(H)$ with $x\in f$ and $f\neq e$, there is a vertex $y\in f\setminus\{x\}$ such that $X_y=0$.
Thus $\Surv(x,e)$ is the event that $x$ survives all dangers except possibly the edge $e$ itself. We remove the condition coming from $e$ so that the survival events for different vertices of the same edge are determined by disjoint sets of trials. Since $H$ contains no $C_2$,
\[
    \P(\Surv(x,e))
    =
    (1-p)
    \frac{\gamma}{\prod_{f\ni x}(1-p^{|f|-1})}
    \prod_{\substack{f\ni x\\ f\neq e}}(1-p^{|f|-1})
    =
    \frac{(1-p)\gamma}{1-p^k}.
\]

\begin{claim}\label{claim:Rx-disjoint}
If $x,x'\in e$ are distinct, then the collections of trials determining $\Surv(x,e)$ and $\Surv(x',e)$ are disjoint.  Consequently the events $\{\Surv(x,e)\}_{x\in e}$ are mutually independent.
\end{claim}

\begin{claimproof}
The event $\Surv(x,e)$ depends on $(X_x,\xi_x)$ and on the activation variables $X_w$ for vertices $w\in f\setminus\{x\}$, where $f\neq e$ is an edge containing $x$. Suppose, for a contradiction, that the determining sets for $\Surv(x,e)$ and $\Surv(x',e)$ meet. Then there are edges $f,g\neq e$ and a vertex $w$ such that $x,w\in f$ and $x',w\in g$; the cases where $w=x$ or $w=x'$ already give that $e$ shares two vertices with one of $f,g$, hence a  $C_2$. If $f=g$, then $f$ contains both $x$ and $x'$, so $e$ and $f$ meet in at least two vertices, again giving a  $C_2$. If $f\neq g$, then the three edges $e,f,g$ have consecutive intersections at $x,w,x'$. Any additional intersection among them gives a  $C_2$; otherwise they form a  $C_3$. This contradicts uncrowdedness. Thus the determining sets are disjoint. Since the underlying trials are independent and these determining sets are pairwise disjoint as $x$ varies over $e$, the claimed mutual independence follows.
\end{claimproof}

For any $U\in \binom{e\setminus\{v\}}{i}$, set $W=e\setminus(\{v\}\cup U)$ and let $M(e,v,U)$ be the event that $e$ is transformed into the edge
$\{v\}\cup U$ in $H'$.  Equivalently,
\[
    M(e,v,U)
    =
    \Surv(v,e)
    \cap
    \bigcap_{u\in U}\Surv(u,e)
    \cap
    \bigcap_{w\in W}\{X_w=1\}.
\]
By \Cref{claim:Rx-disjoint}, and because the variables $X_w$ with $w\in W$ do not occur in the trials determining $\Surv(x,e)$ for $x\in\{v\}\cup U$, these events are mutually independent.  It follows that
\[
    \P(M(e,v,U))
    =
    \left(\frac{(1-p)\gamma}{1-p^k}\right)^{i+1}
    p^{k-i}.
\]
Moreover, the event $M(e,v,U)$ implies $v\in K$.  Hence
\[
    \P(\{v\}\cup U\in E(H')\mid v\in K)
    =
    \frac{\P(M(e,v,U))}{\P(v\in K)}
    =
    \frac{(1-p)^i\gamma^i}{(1-p^k)^{i+1}}p^{k-i}.
\]
Since $H$ has no $C_2$, two distinct original edges cannot give the same trace edge containing $v$.  Thus the following computation counts trace edges without multiplicity. Consequently, for every $v\in V(H)$,
\[
    \E\bigl(d_{i,H'}(v)\mid v\in K\bigr)
    =
    \sum_{k=i}^r
    d_{k,H}(v)\binom{k}{i}
    \frac{(1-p)^i\gamma^i}{(1-p^k)^{i+1}}p^{k-i}.
\]
Set
\[
    \mu_i
    =
    \sum_{k=i}^r
    d_{k}(H)\binom{k}{i}
    \frac{(1-p)^i\gamma^i}{(1-p^k)^{i+1}}p^{k-i}.
\]
Since $1-p>1/2$ and $\gamma\geq e^{-2\delta}>1/2$, the term $k=r$ gives
\[
    \mu_i\geq \frac{\eps^r}{4^r}d^{i/r}\geq d^{\frac{1}{2r}}.
\]
Moreover,
\[
    \E\bigl((i+1)|E_i(H')|\bigr)=(1-p)\gamma n\mu_i\geq nd^{\frac{1}{3r}}.
\]

We next concentrate $T_i:=(i+1)|E_i(H')|$. Regard $T_i$ as a function of the independent trials $\T_u=(X_u,\xi_u)$, $u\in V(H)$. Changing one trial $\T_u$ can only affect the membership in $K$ of $u$ and of vertices sharing an edge with $u$. Hence only traces coming from original edges meeting this one-step neighborhood can change.  Since this neighborhood has size at most $1+r\Delta$, and every vertex is incident with at most $\Delta$ original edges, the number of affected original edges is at most $2r\Delta^2$ for $d$ large. Thus changing one trial changes $T_i$ by at most $\Lambda_i:=2(i+1)r\Delta^2$.
Moreover, if $T_i\geq s$, then $s$ trace incidences can be certified by revealing, for each of them, the trials in the corresponding original edge and in the one-step neighborhoods needed to verify the relevant activation and survival events.  This uses at most $\rho_i s$ trials, where $\rho_i:=2r\Delta$.

Put $M_i:=\E T_i=(1-p)\gamma n\mu_i$.  We have already shown that
$M_i\geq nd^{1/(3r)}$.  Let $a:=d^{-\eps/10}$.  Since
$\Delta\leq (r+1)d$ and $n\geq d^7$, the Talagrand correction term satisfies
\[
    20\Lambda_i\sqrt{\rho_iM_i}+64\Lambda_i^2\rho_i
    \leq \frac{1}{200}aM_i
\]
after increasing $\bar d=\bar d(r,\lambda)$ if necessary.  Indeed,
\[
    \frac{\Lambda_i\sqrt{\rho_iM_i}}{M_i}
    \leq C_r\frac{d^{5/2}}{M_i^{1/2}}
    \leq C_r d^{-1-1/(6r)}
    =o(a),
\]
and similarly $\Lambda_i^2\rho_i/M_i\leq C_r d^{-2-1/(3r)}=o(a)$, uniformly in
the allowed range of $\eps$.

Applying Talagrand's inequality with $t=aM_i/200$, we get
\[
\begin{aligned}
    \P\left(
        |T_i-\E T_i|>
        \frac{1}{100}d^{-\eps/10}\E T_i
    \right)
    &\leq
    4\exp\left(
        -\frac{t^2}{8\Lambda_i^2\rho_i(M_i+t)}
    \right)                                      \\
    &\leq
    4\exp\left(
        -C_r'\frac{a^2M_i}{\Delta^5}
    \right)
    \leq
    \exp\left(-\frac{\kappa_3 n}{d^6}\right).
\end{aligned}
\]
Here the last inequality follows from $a^2=d^{-\eps/5}$, $M_i\geq nd^{1/(3r)}$, $\Delta\leq (r+1)d$, and $\eps\leq 1/(10r)$.
Combining this estimate with the concentration of $|K|$, we obtain,
simultaneously for all $i\in[r]$,
\[
    d_{i}(H')
    =
    \left(1\pm \frac{1}{49}d^{-\eps/10}\right)
    \mu_i
\]
with probability at least $1-r\exp(-\kappa_3n/d^6)-\exp(-\kappa_2n/d^4)$.

In particular, for $i=r$, by increasing $\bar{d}$ if necessary we have
    \[
    \frac{(1-p)^r}{(1-p^r)^{r+1}}
    =
    1\pm \frac{1}{100}d^{-\eps/10}.
    \]
Thus
\[
    d_{r}(H')
    =
    \left(1\pm \frac{1}{30}d^{-\eps/10}\right)d\gamma^r.
\]

It remains to remove the vertices that are incident to too much edges of size less than $r+1$. For $1\leq i\leq r$, define
\[
    \sigma_i=
    \gamma^i d^{i/r}
    \sum_{k=i}^r c_k\binom{k}{i}\eps^{k-i}.
\]
Since $D_r\geq d_r(H)=d$, we have $c_r\geq1$. Since $\gamma\geq 1/2$, the term $k=r$ gives
\[
    \sigma_i\geq  \gamma^id^{i/r}\binom{r}{i} c_r\eps^{r-i}  \geq \dfrac{\eps^r}{2^r} d^{i/r}.
\]
Let
\[
    S_i
    =
    \left\{
        v\in K:
        d_{i,H'}(v)>\left(1+\frac{1}{20}d^{-\eps/10}\right)\sigma_i
    \right\}.
\]
We shall show that the union of the sets $S_i$ is very small.

Fix $v\in V(H)$ and $i\in[r]$. For $i\leq k\leq r$, an edge $e\in E_k(H)$ with $v\in e$ and any $U\in\binom{e\setminus\{v\}}{i}$, let $M^*(e,v,U)$ be the event
\[
    \bigcap_{u\in U}\Surv(u,e)
    \cap
    \bigcap_{w\in e\setminus(\{v\}\cup U)}\{X_w=1\}.
\]
This is the part of $M(e,v,U)$ not involving the event $\Surv(v,e)$. The reason for removing $\Surv(v,e)$ is that the events involving the vertex $v$ are shared by all edges incident with $v$, and hence are not independent as $e$ varies.  The event $M^*(e,v,U)$ only records the behavior of the vertices in $e\setminus\{v\}$: the vertices in $U$ survive all dangers except possibly $e$, while the remaining vertices of $e\setminus(\{v\}\cup U)$ are activated.  Thus, when $v\in K$, every actual $i$-trace of $H'$ containing $v$ is counted by one of the events $M^*(e,v,U)$. Define
\[
    P_{v,i}
    =
    \sum_{k=i}^r
    \sum_{\substack{e\in E_k(H)\\ v\in e}}
    \sum_{U\in\binom{e\setminus\{v\}}{i}}
    \indicator_{M^*(e,v,U)}.
\]
If $v\in K$, then $d_{i,H'}(v)\leq P_{v,i}$.
Set
\[
    Z_e=Z_e(v)=
    \sum_{U\in\binom{e\setminus\{v\}}{i}}
    \indicator_{M^*(e,v,U)}.
\]

\begin{claim}\label{claim:edge-level-independence}
For fixed $v$ and $i$, the variables $Z_e(v)$, indexed by edges $e\ni v$, are mutually independent. Moreover $Z_e(v)\in\{0,1\}$ for every $e$.
\end{claim}

\begin{claimproof}
For a fixed edge $e\ni v$, at most one of the events $M^*(e,v,U)$ can occur, since the event specifies exactly which $i$ vertices of $e\setminus\{v\}$ survive into $K$. Hence $Z_e(v)\in\{0,1\}$.

For an edge $e\ni v$, let $\T(e,v)$ be the set of trials used by the variables contributing to $Z_e(v)$. These consist of the trials of vertices in $e\setminus\{v\}$, together with the activation trials of vertices lying in edges $g\neq e$ which meet $e\setminus\{v\}$.

Suppose, for a contradiction, that distinct edges $e,f\ni v$ have overlapping determining trials after the trials involving $v$ are removed. First, if
\[
    (e\setminus\{v\})\cap(f\setminus\{v\})\neq\varnothing,
\]
then $e$ and $f$ share $v$ and another vertex, giving a forbidden $C_2$.
Second, suppose there is an auxiliary edge $g$ meeting both $e\setminus\{v\}$ and $f\setminus\{v\}$. Then $e,g,f$ form a $C_3$ through the two side intersections and $v$, unless two of these intersection vertices coincide, in which case we already have a $C_2$.
Finally, suppose there are auxiliary edges $g_e$ and $g_f$, where $g_e$ meets $e\setminus\{v\}$, $g_f$ meets $f\setminus\{v\}$, and $g_e\cap g_f\neq\varnothing$. Then $e,g_e,g_f,f$ form a $C_4$ through $v$, the two side intersections, and a vertex of $g_e\cap g_f$, unless a repeated intersection yields a  $C_2$ or $C_3$. All possibilities contradict uncrowdedness. Thus the determining trial sets are disjoint, and the variables $Z_e(v)$ are independent.
\end{claimproof}

By \Cref{claim:edge-level-independence}, $P_{v,i}$ is a sum of independent Bernoulli variables. Furthermore, since $p=\eps d^{-1/r}=o(d^{-\eps/10})$ uniformly in the allowed range of $\eps$, we have
\[
    \left(\frac{1-p}{1-p^k}\right)^i
    \leq
    1+\frac{1}{100}d^{-\eps/10}
\]
for every $1\leq i\leq k\leq r$, after increasing $\bar d$. Hence
\begin{align*}
    \E P_{v,i}
    &=
    \sum_{k=i}^r
    d_{k,H}(v)\binom{k}{i}
    \left(\frac{(1-p)\gamma}{1-p^k}\right)^i
    p^{k-i} \\
    &\leq
    \left(1+\frac{1}{100}d^{-\eps/10}\right)
    \gamma^i d^{i/r}
    \sum_{k=i}^r c_k\binom{k}{i}\eps^{k-i} \\
    &=
    \left(1+\frac{1}{100}d^{-\eps/10}\right)\sigma_i.
\end{align*}
By~\Cref{thm:chernoff},
\[
    \P\left(
        P_{v,i}>
        \left(1+\frac{1}{20}d^{-\eps/10}\right)\sigma_i
    \right)
    \leq
    \exp\left(-\kappa_4d^{-\eps/5}\sigma_i\right)
    \leq
    \exp\left(-\kappa_5d^{\frac{1}{2r}}\right).
\]
Thus
\[
    \E |S_i|\leq n\exp\left(-\kappa_5d^{\frac{1}{2r}}\right).
\]
By Markov's inequality, with probability at least
$1-r\exp(-d^{\frac{1}{4r}})$,
\[
    |S_i|
    \leq
    n\exp\left(-\kappa_5d^{\frac{1}{4r}}\right),\qquad \forall \; i\in [r].
\]
Put $S=\bigcup_{i=1}^r S_i$. Then by increasing $\bar{d}$ if necessary we have
$$|S|\leq n\exp\left(-\kappa_6d^{\frac{1}{4r}}\right)\leq \dfrac{1}{100}d^{-\eps/10} (1-p)\gamma n.$$

Finally define
\[
    V(H'')=K\setminus S,
    \qquad
    E(H'')=\{e\in E(H'):e\subseteq V(H'')\}.
\]
Then $H''$ is an induced subhypergraph of $H'$, hence is uncrowded by \Cref{lem:trace-uncrowded}.  Also by \Cref{claim:subhypergraphindependent}, every independent set $I'$ in $H''$ gives an independent set $I\cup I'$ in $H$.

Therefore
\[
    |V(H'')|= (1\pm d^{-\eps/10})(1-p)\gamma n.
\]

Let $T_r(F)=(r+1)|E_r(F)|$ for a bounded hypergraph $F$.
Each top-layer edge of $H'$ containing a vertex of $S$ is destroyed when
we pass to $H''=H'[K\setminus S]$.  A vertex of $H'$ is incident with at
most $\Delta$ trace edges, and hence deleting one vertex removes at most
$(r+1)\Delta\leq (r+1)^2d$ contributions to $T_r$.  Thus
\[
    T_r(H')-T_r(H'')\leq (r+1)^2d|S|.
\]
Since $|S|\leq n\exp(-\kappa_6d^{1/(4r)})$, by increasing $\bar d$ if necessary we may assume
\[
    (r+1)^2d|S|
    \le
    \frac{1}{1000}d^{-\eps/10}nd\gamma^{r+1}.
\]
Also
\[
    |S| \leq \frac{1}{1000}d^{-\eps/10}(1-p)\gamma n.
\]
Since
\[
    |K|=
    \left(1\pm \frac1{100}d^{-\eps/10}\right)(1-p)\gamma n,
\]
we have
\[
    \frac{|K|}{|K\setminus S|}
    =
    1+O(d^{-\eps/10}).
\]
More precisely, after increasing $\bar d$,
\[
    \frac{|K|}{|K\setminus S|}
    \le
    1+\frac1{200}d^{-\eps/10}.
\]
Moreover, using $|V(H'')|\geq \frac34\gamma n$,
\[
    \frac{T_r(H')-T_r(H'')}{|V(H'')|}
    \le
    \frac1{200}d^{-\eps/10}d\gamma^r.
\]
Since we already know
\[
    d_r(H')
    =
    \left(1\pm \frac1{30}d^{-\eps/10}\right)d\gamma^r,
\]
it follows that
\[
    d_r(H'')
    =
    \left(1\pm \frac1{20}d^{-\eps/10}\right)d\gamma^r.
\]

It remains only to verify the maximum degree condition.  By the definition
of the sets $S_i$, every vertex $v\in V(H'')$ satisfies
\[
    d_{i,H''}(v)
    \leq
    d_{i,H'}(v)
    \leq
    \left(1+\frac{1}{20}d^{-\eps/10}\right)\sigma_i.
\]
Therefore
\[
    D_{i}(H'')
    \leq
    \left(1+\frac{1}{20}d^{-\eps/10}\right)
    \gamma^i d^{i/r}
    \sum_{k=i}^r c_k\binom{k}{i}\eps^{k-i}.
\]
Using the lower bound
\[
    {d_{r}(H'')}^{i/r}
    \geq
    \left(1-\frac{1}{20}d^{-\eps/10}\right)
    \gamma^i d^{i/r},
\]
we get, for every $1\leq i\leq r$,
\[
    \frac{D_{i}(H'')}{{d_{r}(H'')}^{i/r}}
    \leq
    (1+d^{-\eps/10})
    \sum_{k=i}^r c_k\binom{k}{i}\eps^{k-i}.
\]
All failure probabilities above, summed over $i\in[r]$, are less than $1$ once $d\geq \bar{d}(r,\lambda)$ and $n\geq d^7$. Hence there is a realization for which all desired properties hold simultaneously. This gives the required pair $(I,H'')$.
\end{proof}

The coefficients given by \Cref{lem:nibble} enjoy good inductive upper bounds.

\begin{lemma}\label{lem:coefficientofmaxavg}
Suppose $P(r,0)\geq 0$ and $P(k,0)=0$ for every $k<r$.
If
\[
    P(k,i)\leq
    \sum_{\ell=k}^rP(\ell,i-1)\binom{\ell}{k}\lambda^{\ell-k}
    \qquad (1\leq k\leq r,\ i\geq 1),
\]
then
\[
    P(k,i)\leq
    P(r,0)\binom{r}{k}(i\lambda)^{r-k}
    \qquad (1\leq k\leq r,
    \ i\geq 0).
\]
\end{lemma}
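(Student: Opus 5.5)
We argue by induction on $i$. Implicitly $\lambda\geq 0$; we also use that all $P(\ell,i-1)$ on the right-hand side enter with nonnegative coefficients $\binom{\ell}{k}\lambda^{\ell-k}$, so upper bounds on the $P(\ell,i-1)$ may be substituted into the recurrence. The base case $i=0$ is immediate: for $k=r$ the claimed bound reads $P(r,0)\leq P(r,0)$, and for $k<r$ we have $P(k,0)=0=P(r,0)\binom{r}{k}\cdot 0^{r-k}$.

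For the inductive step, fix $i\geq 1$ and $1\leq k\leq r$, and assume $P(\ell,i-1)\leq P(r,0)\binom{r}{\ell}((i-1)\lambda)^{r-\ell}$ for all $\ell$. Plugging into the hypothesis gives
\[
    P(k,i)\leq P(r,0)\sum_{\ell=k}^r\binom{r}{\ell}\binom{\ell}{k}((i-1)\lambda)^{r-\ell}\lambda^{\ell-k}.
\]
Next we apply the identity $\binom{r}{\ell}\binom{\ell}{k}=\binom{r}{k}\binom{r-k}{\ell-k}$. Writing $j=\ell-k$, the sum becomes
\[
    \binom{r}{k}\sum_{j=0}^{r-k}\binom{r-k}{j}\lambda^{j}((i-1)\lambda)^{r-k-j}=\binom{r}{k}\bigl(\lambda+(i-1)\lambda\bigr)^{r-k}=\binom{r}{k}(i\lambda)^{r-k}
\]
by the binomial theorem, which is exactly the claimed bound at step $i$.

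There is no real obstacle; the only point needing care is the sign condition. Since $P(r,0)\geq 0$ and $\lambda\geq0$, the coefficients in the recurrence are nonnegative, so the substitution is valid. In the case $i-1=0$ with $\ell=r$, the factor $0^0$ must be read as $1$, which matches the base case.
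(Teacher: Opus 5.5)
Your proof is correct and is essentially the paper's argument: induction on $i$, substitute the inductive bound, apply $\binom{r}{\ell}\binom{\ell}{k}=\binom{r}{k}\binom{r-k}{\ell-k}$, and collapse the sum to $(i\lambda)^{r-k}$ by the binomial theorem. Your remark that the substitution needs $\lambda\geq 0$ is a good addition, since the paper leaves it implicit (it holds in the application, where $\lambda=h>0$); note that only $\lambda\geq 0$, not $P(r,0)\geq 0$, is what makes the coefficients nonnegative.
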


\begin{proof}
The assertion is immediate for $i=0$.
Assume it is known at time $i-1$.
Then
\begin{align*}
    P(k,i)
    &\leq
    \sum_{\ell=k}^r
    P(r,0)\binom{r}{\ell}((i-1)\lambda)^{r-\ell}
    \binom{\ell}{k}\lambda^{\ell-k}  \\
    &=
    P(r,0)\binom{r}{k}\lambda^{r-k}
    \sum_{\ell=k}^r
    \binom{r-k}{\ell-k}(i-1)^{r-\ell} \\
    &=
    P(r,0)\binom{r}{k}\lambda^{r-k}i^{r-k}
    =
    P(r,0)\binom{r}{k}(i\lambda)^{r-k}.
\end{align*}
This proves the induction.
\end{proof}

The following lemma records the scale gain during the initial top-layer cleaning phase.

\begin{lemma}\label{lem:cleaning}
    For every $r\geq2$ there exists $d_{\mathrm{cl}}=d_{\mathrm{cl}}(r)$ such that the following holds for all $d\geq d_{\mathrm{cl}}$.  
    Let $H_0$ be an $n$-vertex $(r+1)$-uniform hypergraph with $d_r(H_0)=d$.  Suppose that, for $ 0\leq j<t $, $H_{j+1}$ is obtained by deleting a vertex $v_j$ such that \[d_{r,H_j}(v_j) > (1+\theta)d_r(H_j),\] where
    $0<\theta<1$.  
    Let
    \[
        n_i=|V(H_i)|,
        \qquad \dr{i}=d_r(H_i),
        \qquad R_i=\frac{n}{n_i},
        \qquad Q_i=\frac{n_i}{\dr{i}^{1/r}}.
    \]
    If $n_i\geq d^2$ for all $0\leq i\leq t$, then
    \[
        Q_i\geq R_i^{\theta/3}\frac{n}{d^{1/r}},
        \qquad 0\leq i\leq t.
    \]
\end{lemma}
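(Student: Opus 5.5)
The plan is to prove a one-step multiplicative gain and then telescope. Namely, I would show that for every $0\leq j<t$,
\[
    \frac{Q_{j+1}}{Q_j}\geq \left(\frac{n_j}{n_{j+1}}\right)^{\theta/3}=\left(\frac{n_j}{n_j-1}\right)^{\theta/3}.
\]
Multiplying these inequalities over $j<i$ then gives $Q_i\geq (n_0/n_i)^{\theta/3}Q_0=R_i^{\theta/3}\,n/d^{1/r}$, because $n_0=n$ and $Q_0=n/d^{1/r}$. The case $i=0$ is trivial.

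For the one-step bound, fix $j$ and write $N=n_j$, $x=1/N$, $D=d_{r,H_j}(v_j)$, and $m=|E_r(H_j)|=\dr{j}N/(r+1)$. Deleting $v_j$ removes exactly $D$ edges, so
\[
    \dr{j+1}=\frac{(r+1)(m-D)}{N-1}<\dr{j}\,\frac{N-(r+1)(1+\theta)}{N-1}.
\]
Here I use $D>(1+\theta)\dr{j}$. Since $N\geq d^2$ is large, $N>a:=(r+1)(1+\theta)$. If $\dr{j+1}=0$, the quotient is infinite (or the claim is vacuous), so I would set that case aside. Otherwise
\[
    r\log\frac{Q_{j+1}}{Q_j}\geq (r+1)\log(1-x)-\log(1-ax).
\]

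The main obstacle is that $\theta$ may be tiny. A first-order expansion gives a gain of only $(r+1)\theta x$, while the crude error terms are of order $x^2$, so a naive comparison would require $\theta\gg 1/N$. To get around this, I would keep the second-order term of $-\log(1-ax)$ as well. Using $-\log(1-y)\geq y+y^2/2$ and $\log(1-x)\geq -x-x^2/2-x^3$ for $0<x$ small, one obtains
\[
    r\log\frac{Q_{j+1}}{Q_j}\geq (r+1)\theta x+\frac{x^2}{2}\bigl((r+1)^2(1+\theta)^2-(r+1)\bigr)-(r+1)x^3.
\]
The $x^2$ coefficient is at least $r(r+1)/2>0$, and it absorbs the $x^3$ term once $N\geq d_{\mathrm{cl}}^2$ is large in terms of $r$. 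This leaves $r\log(Q_{j+1}/Q_j)\geq (r+1)\theta x+\frac{r(r+1)}{4}x^2$.

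Finally, the target is $\frac{r\theta}{3}\log\frac{N}{N-1}\leq\frac{r\theta}{3}(x+x^2)$ for small $x$. Since $\theta<1$, this is dominated termwise by $(r+1)\theta x+\frac{r(r+1)}{4}x^2$, with plenty of room. This proves the one-step inequality, and telescoping finishes the proof. The hypothesis $n_i\geq d^2\geq d_{\mathrm{cl}}^2$ is used only to guarantee that $x=1/n_i$ is small enough, depending on $r$, for these elementary logarithm estimates.
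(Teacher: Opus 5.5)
Your proof is correct and follows the paper's route: the edge count after deleting $v_j$ gives a one-step gain $Q_{j+1}/Q_j\geq (n_j/(n_j-1))^{\theta/3}$, which you then telescope. The only difference is in the elementary estimate. The paper writes $d_{(j+1)}/d_{(j)}\leq 1-\frac{r+(r+1)\theta}{n_j-1}$ and applies $(1-y)^{-1/r}\geq 1+y/r$, which gives exactly $Q_{j+1}/Q_j\geq 1+\frac{(r+1)\theta}{rn_j}$; since this has no error terms, the second-order bookkeeping you use to handle small $\theta$ is not actually needed.
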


\begin{proof}
    Since $H_i$ is $(r+1)$-uniform,
    \[
        (n_i-1)\dr{i+1}
        \leq n_i\dr{i}-(r+1)(1+\theta)\dr{i}.
    \]
    Thus
    \[
        \frac{\dr{i+1}}{\dr{i}}
        \leq
        1-\frac{r+(r+1)\theta}{n_i-1}.
    \]
    Since $n_i\geq d^2$ and $d$ is sufficiently large,
    \begin{align*}
        \frac{Q_{i+1}}{Q_i}
        &=
        \frac{n_i-1}{n_i}\left(\frac{\dr{i}}{\dr{i+1}}\right)^{1/r}  \\
        &\geq
        \frac{n_i-1}{n_i}
        \left(1-\frac{r+(r+1)\theta}{n_i-1}\right)^{-1/r}          \\
        &\geq
        \frac{n_i-1}{n_i}
        \left(1+\frac{r+(r+1)\theta}{r(n_i-1)}\right)              \\
        &=
        1+\frac{(r+1)\theta}{rn_i}
        \geq
        \left(\frac{n_i}{n_i-1}\right)^{\theta/3}.
    \end{align*}
    Iterating over the cleaning steps gives
    \[
        Q_i
        \geq
        \frac{n}{d^{1/r}}
        \prod_{j=0}^{i-1}\left(\frac{n_j}{n_j-1}\right)^{\theta/3}
        =
        R_i^{\theta/3}\frac{n}{d^{1/r}},
    \]
    for every $0\leq i\leq t$.
\end{proof}

\section{Proof of \Cref{thm:main}}\label{sec:mainproof}

We now prove \Cref{thm:main}.  Fix $r$ and $\eta>0$.  Choose constants $\eps_0$ and $\rho$ so that
\[
    0<\rho\ll_r \eps_0\ll_r \eta, \qquad \eps_0<\frac{1}{100r}, \qquad \rho<\frac{1}{1000r^2},
\]
and, more explicitly,
\[
    11\eps_0<\frac{\eta}{2}, \qquad 20r\rho<\frac{\eta}{2}.
\]
Set $\lambda_*:=\rho\left(\eps_0/5\right)^{(r-1)/r}$. 

Let $\bar d(r,\lambda_*)$ be the threshold in \Cref{lem:nibble}.
Choose $d_*$ sufficiently large so that
\[
    d_*^{\eps_0/5}\geq \bar d(r,\lambda_*),
\]
and so that all estimates below hold for every $d\geq d_*$.

Let $G$ be an $n$-vertex uncrowded $(r+1)$-uniform hypergraph with average degree $d=d_r(G)\geq d_*$.  We may assume \[n\geq d^{30}.\] Indeed, otherwise we apply the argument below to a disjoint union of sufficiently many copies of $G$. The average degree and uncrowdedness are preserved, and one of the copies contains an independent set of at least the average size obtained in the union.

Throughout the proof put
\[  L:=\log d^{1/r},
    \qquad
    h:=\rho L^{-(r-1)/r},
    \qquad
    T:=\left\lfloor(1-10\eps_0)\frac{L^{1/r}}{h}\right\rfloor.\]
By increasing $d_*$ if necessary, we may assume that for every $d\geq d_*$ and every sequence $a_j\geq d^{\eps_0/5}$, $0\leq j<T$, the following elementary estimates hold for every $0\leq k\leq T$:
\begin{equation}\label{eq:B1}
    T\leq 2\rho^{-1}L,\quad
    1+\sum_{j<k}a_j^{-h/10}\leq \prod_{j<k}\left(1+a_j^{-h/10}\right) \leq 1+\frac{\eps_0}{100},\quad
    \sum_{j<k}\frac{h}{a_j^{1/r}}\leq\frac{\eps_0}{100}.
\end{equation}

\subsection*{Initial top-layer cleaning}
We first perform a deterministic cleaning of the top layer.

\begin{breakablealgorithm}
\caption{Top-layer cleaning procedure}\label{algorithm:clean}
\begin{flushleft}
\textbf{Input}: An $n$-vertex $(r+1)$-uniform uncrowded hypergraph $G$ with average degree $d=d_r(G)$, and parameter $\eps_0$.\\
\textbf{Output}: A residual hypergraph $H$.
\end{flushleft}

\begin{enumerate}[itemsep=.2cm]
    \item Initialize
    \[
        H_0=G,
        \qquad R_0=1,
        \qquad i=0.
    \]
    \item \label{algstep:stop} If one of the following three stopping criteria holds, return $H_i$:
    \[
\begin{array}{ll}
    \hypertarget{stop:S1}{\textnormal{[S1]}}\,
    d_r(H_i)<d^{1-\eps_0/4}; \qquad
    \hypertarget{stop:S2}{\textnormal{[S2]}}\,
    R_i\geq d^{\eps_0/(10r)}; \qquad
    \hypertarget{stop:S3}{\textnormal{[S3]}}\,
    D_r(H_i)\leq (1+\eps_0/2)d_r(H_i).
\end{array}
    \]
    \item If no stopping criterion applies, then
        \[
            D_r(H_i)>(1+\eps_0/2)d_r(H_i),
        \]
        choose a vertex $v_i\in \text{argmax}_{v\in V(H_i)} d_{r,H_i}(v)$ and set
        \[
            H_{i+1}=H_i-v_i,
            \qquad
            R_{i+1}=R_i\frac{|V(H_i)|}{|V(H_i)|-1}.
        \]
    \item Update $i\leftarrow i+1$ and return to Step~\ref{algstep:stop}.
\end{enumerate}
\end{breakablealgorithm}
\vspace{0.3cm}

During this cleaning procedure the current hypergraph remains uniform.  We shall use the following immediate consequence of \Cref{lem:randomind}: There is a constant $C_r$ such that every $(r+1)$-uniform hypergraph $F$ with average degree $d_F$ satisfies
\begin{equation}\label{eq:uniform-alteration-section3}
    \alpha(F)\geq C_r^{-1}\frac{|V(F)|}{\max\{d_F,1\}^{1/r}}.
\end{equation}
Let $s$ be the stopping time of \Cref{algorithm:clean}.  At any cleaning stopping time, we have
\begin{equation}\label{eq:R-stopping-bound}
    R_s\leq 2d^{\eps_0/(10r)}.
\end{equation}
Indeed, if $s=0$, this is clear.  If $s>0$, then no stopping criterion held at time $s-1$, and in particular
\[
    R_{s-1}<d^{\eps_0/(10r)}.
\]
Since the last cleaning step deletes only one vertex,
\[
R_s=R_{s-1}\frac{|V(H_{s-1})|}{|V(H_{s-1})|-1}\leq 2R_{s-1}<2d^{\eps_0/(10r)}.
\]
We now consider the possible reasons for stopping. 
If $H_s$ satisfies \hyperlink{stop:S1}{\textnormal{[S1]}}, then
\[\max\{d_r(H_s),1\}\leq d^{1-\eps_0/4}.\]
By \eqref{eq:uniform-alteration-section3} and \eqref{eq:R-stopping-bound},
\[
    \alpha(G)\geq\alpha(H_s)\geq C_r^{-1}\frac{n}{R_s\max\{d_r(H_s),1\}^{1/r}}\geq(2C_r)^{-1}d^{3\eps_0/(20r)}\frac{n}{d^{1/r}}.
\]
For $d_*$ sufficiently large, the last expression is at least
\[
    r^{-1/r}\left(\frac{\log d}{d}\right)^{1/r}n,
\]
and hence the desired conclusion follows.

Next suppose that $H_s$ satisfies \hyperlink{stop:S2}{\textnormal{[S2]}}, and that \hyperlink{stop:S1}{\textnormal{[S1]}} does not hold.  Then
\[
    d_r(H_s)\geq d^{1-\eps_0/4}>1.
\]
Moreover, by \eqref{eq:R-stopping-bound} and the assumption $n\geq d^{30}$,
for every $0\leq j\leq s$,
\[
    |V(H_j)|\geq |V(H_s)|=\frac{n}{R_s}\geq\frac12 d^{30-\eps_0/(10r)}\geq d^2,
\]
provided $d_*$ is large. Hence \Cref{lem:cleaning} is applicable to the cleaning sequence with $\theta=\eps_0/2$.  Since $R_s\geq d^{\eps_0/(10r)}$, it gives
\[
    Q_s:=\frac{|V(H_s)|}{d_r(H_s)^{1/r}}\geq R_s^{\eps_0/6}\frac{n}{d^{1/r}}\geq d^{\eps_0^2/(60r)}\frac{n}{d^{1/r}}.
\]
Using \eqref{eq:uniform-alteration-section3},
\[
    \alpha(G)\geq \alpha(H_s)\geq C_r^{-1}Q_s \geq C_r^{-1}d^{\eps_0^2/(60r)}\frac{n}{d^{1/r}}.
\]
Again, for $d_*$ sufficiently large, this is at least
\[
    r^{-1/r}\left(\frac{\log d}{d}\right)^{1/r}n.
\]
Thus the theorem is proved in this case as well. 

We may therefore assume that \Cref{algorithm:clean} stops because \hyperlink{stop:S3}{\textnormal{[S3]}} holds, while neither \hyperlink{stop:S1}{\textnormal{[S1]}} nor \hyperlink{stop:S2}{\textnormal{[S2]}} holds.  
Let
\[ H_{\mathrm{cl}}:=H_s,\qquad n_0:=|V(H_{\mathrm{cl}})|,\qquad d_{(0)}:=d_r(H_{\mathrm{cl}}).
\]
Then
\begin{equation}\label{eq:cleaned-initial-properties}
    d_{(0)}\geq d^{1-\eps_0/4},\qquad D_r(H_{\mathrm{cl}}) \leq \left(1+\frac{\eps_0}{2}\right)d_{(0)},
\end{equation}
and \Cref{lem:cleaning} gives
\begin{equation}\label{eq:initial-Q-cleaned}
    \frac{n_0}{d_{(0)}^{1/r}}\geq \frac{n}{d^{1/r}}.
\end{equation}
The role of the cleaning procedure is precisely to reach this balanced top-layer regime while preserving the scale $|V(H)|/d_r(H)^{1/r}$.  From this point on, no further top-layer cleaning is performed.

\subsection*{The trace nibble phase}
We now run the trace nibble procedure.
\begin{breakablealgorithm}\label{algo:nibble-after-clean}
\caption{Multi-round trace nibble}
\begin{flushleft}
\textbf{Input}: The cleaned residual hypergraph $H_{\mathrm{cl}}$, the original average degree $d$, and the parameters $\eps_0,\rho$.

\textbf{Output}: An independent set $I\subseteq V(H_{\mathrm{cl}})$.
\end{flushleft}
\begin{enumerate}[label=(\arabic*)]
    \item Initialize
    \[
        H'_0=H_{\mathrm{cl}},\qquad I=\varnothing, \qquad k=0.
    \]

    \item If $k=T$, stop and output $I$.

    \item Apply \Cref{lem:nibble} to $H'_k$ with parameters
    \[
        \lambda=\lambda_*,\qquad \eps=h.
    \]
    Let $(I_k,H'_{k+1})$ be the pair obtained from \Cref{lem:nibble}.
    Update
    \[
        I\leftarrow I\cup I_k,\qquad k\leftarrow k+1,
    \]
    and return to Step~2.
\end{enumerate}
\end{breakablealgorithm}

The compatibility conclusion in \Cref{lem:nibble} ensures inductively that the final set $I$ produced by \Cref{algo:nibble-after-clean} is independent in $H_{\mathrm{cl}}$, and hence also independent in $G$.  It remains to verify that \Cref{lem:nibble} is applicable for all $T$ rounds and to estimate the size of $I$.

For $0\leq k\leq T$, provided the first $k$ nibble rounds have been performed, write
\[
    n_k=|V(H'_k)|,
    \qquad
    \dr{k}=d_r(H'_k),
    \qquad
    Q_k=\frac{n_k}{\dr{k}^{1/r}}.
\]
For $1\leq \ell\leq r$, define the normalized layer-degree coefficients
\[
    c_{\ell,k}:=\frac{D_\ell(H'_k)}{\dr{k}^{\ell/r}},
    \qquad 1\leq \ell\leq r.
\]
At time $k=0$, the hypergraph $H'_0=H_{\mathrm{cl}}$ is $(r+1)$-uniform.  Hence
\[
    c_{\ell,0}=0 \quad (\ell<r),
    \qquad
    c_{r,0}\leq 1+\frac{\eps_0}{2},
\]
and \eqref{eq:cleaned-initial-properties}--\eqref{eq:initial-Q-cleaned} give
\[
    d_{(0)}\geq d^{1-\eps_0/4},
    \qquad
    Q_0\geq \frac{n}{d^{1/r}}.
\]
For each round $j$, set
\[
    p_j:=\frac{h}{d_{(j)}^{1/r}},
    \qquad
    \gamma_j:=
    \prod_{\ell=1}^r(1-p_j^\ell)^{D_\ell(H'_j)},
    \qquad
    \delta_j:=
    \sum_{\ell=1}^r c_{\ell,j}h^\ell.
\]
Also put
\[
    A_k:=\prod_{j=0}^{k-1}\left(1+d_{(j)}^{-h/10}\right),
    \qquad
    A_0:=1.
\]
\begin{claim}\label{claim:section3-induction}
The trace nibble phase can be performed for all $T$ rounds.  Moreover, for
every $0\leq k\leq T$,
\[
    d^{\eps_0/5}\leq d_{(k)}\leq d,
    \qquad
    A_k\leq 1+\frac{\eps_0}{100},
\]
and for every $1\leq \ell\leq r$,
\begin{equation}\label{eq:coefficient-induction-section3}
    c_{\ell,k} \leq A_k\left(1+\frac{\eps_0}{2}\right) \binom r\ell (kh)^{r-\ell}. \quad (\text{Here }0^0=1. )
\end{equation}
\end{claim}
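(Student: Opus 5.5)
We argue by induction on $k$, proving simultaneously that round $k$ can be performed (i.e.\ the hypotheses of \Cref{lem:nibble} hold for $H'_k$ with $\lambda=\lambda_*$, $\eps=h$) and that the three displayed bounds hold at time $k+1$. The base case $k=0$ is exactly the cleaned initial data: $d_{(0)}\geq d^{1-\eps_0/4}\geq d^{\eps_0/5}$, $d_{(0)}\leq D_r(H_{\mathrm{cl}})\leq$ the original maximum over a subhypergraph, and we need $d_{(0)}\leq d$; this follows because $Q_0\ge n/d^{1/r}$ together with $n_0\le n$ gives $d_{(0)}^{1/r}\le d^{1/r}n_0/n\le d^{1/r}$. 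Also $A_0=1$ and \eqref{eq:coefficient-induction-section3} at $k=0$ reduces to $c_{r,0}\le 1+\eps_0/2$ and $c_{\ell,0}=0$ for $\ell<r$.

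For the inductive step, assume the bounds at time $k<T$. We check the hypotheses of \Cref{lem:nibble}. First, $d_{(k)}\geq d^{\eps_0/5}\geq \bar d(r,\lambda_*)$ by the choice of $d_*$. Second, the range of $\eps=h$: since $d_{(k)}\le d$ we have $\log d_{(k)}^{1/r}\le L$, so $h=\rho L^{-(r-1)/r}\le \rho(\log d_{(k)}^{1/r})^{-(r-1)/r}$, while $\log d_{(k)}^{1/r}\ge (\eps_0/5)L$ gives $h\ge \rho(\eps_0/5)^{(r-1)/r}(\log d_{(k)}^{1/r})^{-(r-1)/r}=\lambda_*(\log d_{(k)}^{1/r})^{-(r-1)/r}$; and $h\le\rho<1/(10r)$. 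Third, $n_k\ge d_{(k)}^7$: here we use that $n_k\ge (1-p)\gamma$-type factors times $n_{k-1}$, and iterating the vertex estimate of \Cref{lem:nibble}(3) gives $n_k\ge n\,d^{-O(1)}$; more cleanly, from $\gamma_j\ge e^{-2\delta_j}$ and $T\le 2\rho^{-1}L$ we get $n_k\ge n_0 e^{-O(T)}\ge n d^{-1}$, which exceeds $d^7$ as $n\ge d^{30}$. Fourth, the coefficient hypotheses: by induction $c_{r,k}\le A_k(1+\eps_0/2)<1+1/(4r)$, and for $\ell<r$, $c_{\ell,k}\le 2\binom r\ell (Th)^{r-\ell}\le 2\binom r\ell L^{(r-\ell)/r}$, which is polylogarithmic and hence $<d_{(k)}^{1/r}$. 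Finally $\delta_k=\sum_\ell c_{\ell,k}h^\ell\le 2\sum_\ell\binom r\ell (kh)^{r-\ell}h^\ell\le 2\big((k+1)h\big)^r-2(kh)^r$ scaled: precisely $\delta_k\le 2[((k+1)h)^r-(kh)^r]\le 2r h\,((k+1)h)^{r-1}\le 2rh L^{(r-1)/r}=2r\rho<1/(100r)$ by $\rho<1/(1000r^2)$.

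Applying \Cref{lem:nibble}, conclusion (4) gives $c_{\ell,k+1}\le(1+d_{(k)}^{-h/10})\sum_{m\ge\ell}c_{m,k}\binom m\ell h^{m-\ell}$. Rather than invoking \Cref{lem:coefficientofmaxavg} as a black box, we repeat its one-line binomial computation with the inductive bound: substituting \eqref{eq:coefficient-induction-section3} at time $k$ and using $\sum_{m}\binom{r}{m}\binom m\ell k^{r-m}=\binom r\ell (k+1)^{r-\ell}$ yields the bound at time $k+1$ with $A_{k+1}=A_k(1+d_{(k)}^{-h/10})$, and $A_{k+1}\le 1+\eps_0/100$ by \eqref{eq:B1} since all $d_{(j)}\ge d^{\eps_0/5}$.

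The main obstacle is propagating $d^{\eps_0/5}\le d_{(k+1)}\le d$. By conclusion (3), $d_{(k+1)}=(1\pm\tfrac1{20}d_{(k)}^{-h/10})d_{(k)}\gamma_k^r\le d_{(k)}$ (the upper bound needs a touch of care; I would use $\gamma_k\le (1-p_k^r)^{D_r}\le 1-\Omega(h^r)$, which beats the $d_{(k)}^{-h/10}$ error since $h^r\gg d^{-h/10}$ for large $d$). For the lower bound, $\gamma_k\ge e^{-2\delta_k}$ and $\sum_{j<T}\delta_j\le 2(Th)^r\le 2(1-10\eps_0)^r L$ telescopically, so $d_{(k+1)}\ge d_{(0)}e^{-2r\sum\delta_j}\prod(1-\tfrac1{20}d_{(j)}^{-h/10})$. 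This exponent $e^{-4r(1-10\eps_0)^rL}=d^{-4(1-10\eps_0)^r}$ is too crude; I expect one must use the sharper $\gamma_j\ge\exp(-\delta_j/(1-p_j))$ with $\delta_j\le A_j(1+\eps_0/2)[((j+1)h)^r-(jh)^r]+$ lower-order, giving $\prod\gamma_j^r\ge \exp(-r(1+\eps_0/2)(1+\eps_0/100)(Th)^r(1+o(1)))\ge d^{-(1-10\eps_0)^r(1+\eps_0)}$, and combined with $d_{(0)}\ge d^{1-\eps_0/4}$ this stays above $d^{\eps_0/5}$ because $(1-10\eps_0)^r(1+\eps_0)+\eps_0/4\le 1-\eps_0/5$. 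Getting these exponents to close precisely is where I expect the real work.
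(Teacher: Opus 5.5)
Your proposal is correct and follows the paper's proof essentially step for step. It uses the same verification of the hypotheses of \Cref{lem:nibble} (including $\delta_k\le 2[((k+1)h)^r-(kh)^r]\le 2r\rho$), the same binomial propagation of the coefficients, and $\gamma_k^r\le e^{-rh^r}$ against the $d_{(k)}^{-h/10}$ error for $d_{(k+1)}\le d_{(k)}$. For the lower bound it uses exactly the sharper estimate $-\log\gamma_j\le\delta_j/(1-p_j)$ with telescoping, as the paper does. The exponent inequality you were unsure about closes with room to spare, since $(1-10\eps_0)^r(1+\eps_0)+\eps_0/4\le 1-8.75\eps_0$.

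One small slip: your crude vertex bound gives $n_k\ge n_0d^{-O(1)}$, for example $n_0d^{-8}$ via $\delta_j\le 2r\rho$ and $T\le 2\rho^{-1}L$, not $nd^{-1}$. This is harmless because $n\ge d^{30}$.
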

\begin{proof}
    We argue by induction on $k$.  The case $k=0$ follows from the properties of $H'_0$ recorded above.

    Assume that $0\leq k<T$, that the first $k$ nibble rounds have been performed, and that the estimates in the claim hold at all earlier times up to $k$.  We first check the hypotheses of \Cref{lem:nibble} for $H'_k$.

    The degree lower bound gives
    \[
    d_{(k)}\geq d^{\eps_0/5}\geq \bar d(r,\lambda_*).
    \]
    Moreover,
    \[
    \log d_{(k)}^{1/r}\geq\frac{\eps_0}{5}\log d^{1/r}=\frac{\eps_0}{5}L,
    \]
    and therefore
    \[
    h=\rho L^{-(r-1)/r}\geq\rho\left(\frac{\eps_0}{5}\right)^{(r-1)/r}\left(\log d_{(k)}^{1/r}\right)^{-(r-1)/r}=\lambda_* \left(\log d_{(k)}^{1/r}\right)^{-(r-1)/r}.
    \]
    Also $h\leq 1/(10r)$ for $d_*$ sufficiently large. 
    Next we verify the smallness of $\delta_k$.  By \eqref{eq:coefficient-induction-section3},
    \[
    \begin{aligned}
    \delta_k &\leq A_k\left(1+\frac{\eps_0}{2}\right) \sum_{\ell=1}^r \binom r\ell (kh)^{r-\ell}h^\ell        \\
    &= A_k\left(1+\frac{\eps_0}{2}\right) \left(((k+1)h)^r-(kh)^r\right).
    \end{aligned}
    \]
    Since $k<T$,
    \[
    (k+1)h\leq Th\leq (1-10\eps_0)L^{1/r}.
    \]
    Using $x^r-y^r\leq rx^{r-1}(x-y)$ for $x\geq y\geq0$, we get
    \[
    ((k+1)h)^r-(kh)^r \leq r\left((1-10\eps_0)L^{1/r}\right)^{r-1}h =r(1-10\eps_0)^{r-1}\rho \leq r\rho.
    \]
    Together with $A_k\leq 1+\eps_0/100$, this yields
    \begin{equation}\label{eq:delta-k-small-section3}
    \delta_k \leq 2r\rho \leq \frac1{100r}.
    \end{equation}

    The top-layer coefficient satisfies
    \[
    c_{r,k} \leq A_k\left(1+\frac{\eps_0}{2}\right) \leq \left(1+\frac{\eps_0}{100}\right) \left(1+\frac{\eps_0}{2}\right) < 1+\frac1{4r}.
    \]
    For $\ell<r$, \eqref{eq:coefficient-induction-section3} and $kh\leq L^{1/r}$ give
    \[
    c_{\ell,k} \leq 2\binom r\ell L^{(r-\ell)/r} < d_{(k)}^{1/r},
    \]
    provided $d_*$ is sufficiently large, since $d_{(k)}^{1/r}\geq d^{\eps_0/(5r)}$.

    It remains to check the vertex condition $n_k\geq d_{(k)}^7$.  Since the cleaning procedure did not stop by \hyperlink{stop:S2}{\textnormal{[S2]}}, we have
    \[
    n_0>|V(G)|d^{-\eps_0/(10r)} = nd^{-\eps_0/(10r)}.
    \]
    For $k=0$, this already gives $n_0\geq d_{(0)}^7$ for $d_*$ large, because $n\geq d^{30}$ and $d_{(0)}\leq d$.  For $k>0$, the vertex estimate in \Cref{lem:nibble} from the previous rounds gives
    \[
    n_k \geq n_0 \prod_{j=0}^{k-1} \left(1-d_{(j)}^{-h/10}\right) (1-p_j)\gamma_j.
    \]
    By the lower bound on $\gamma_j$ in \eqref{eq:lowerboundforgamma},
    \[
    -\log\gamma_j \leq \frac{\delta_j}{1-p_j} \leq \left(1+\frac{\eps_0}{50}\right)\delta_j
    \]
    for $d_*$ sufficiently large.  Summing the estimate for $\delta_j$ above over $j<k$ gives
    \[
    \sum_{j<k}\delta_j \leq 2(kh)^r \leq 2L \leq 2rL.
    \]
    Using \eqref{eq:B1}, we obtain
    \[
    n_k \geq n_0 \left(1-\sum_{j<k}d_{(j)}^{-h/10}\right) \left(1-\sum_{j<k}p_j\right) \exp(-3rL).
    \]
    Since $rL=\log d$, $n\geq d^{30}$, and $n_0\geq nd^{-\eps_0/(10r)}$, it follows, by increasing $d_*$ if necessary, that
    \[
    n_k\geq d^{20}.
    \]
    As $d_{(k)}\leq d$, this implies $n_k\geq d_{(k)}^7$.

    Thus all hypotheses of \Cref{lem:nibble} hold for $H'_k$.  Applying \Cref{lem:nibble} gives the next pair $(I_k,H'_{k+1})$.

    We now verify the inductive estimates at time $k+1$.  First, \eqref{eq:B1} gives
    \[
    A_{k+1} = \prod_{j=0}^{k} \left(1+d_{(j)}^{-h/10}\right)
    \leq 1+\frac{\eps_0}{100}.
    \]
    The maximum-degree conclusion of \Cref{lem:nibble} gives, for every $1\leq \ell\leq r$,
    \[
    c_{\ell,k+1} \leq \left(1+d_{(k)}^{-h/10}\right) \sum_{m=\ell}^r c_{m,k}\binom m\ell h^{m-\ell}.
    \]
    Dividing by the accumulated factor $A_{k+1}$, and applying \Cref{lem:coefficientofmaxavg} with initial value $c_{r,0}\leq 1+\eps_0/2$, yields
    \[
    c_{\ell,k+1} \leq A_{k+1}\left(1+\frac{\eps_0}{2}\right) \binom r\ell ((k+1)h)^{r-\ell}.
    \]
    It remains to control $d_{(k+1)}$.  Since $D_r(H'_k)\geq d_{(k)}$, we have
    \[
    \gamma_k^r \leq (1-p_k^r)^{r d_{(k)}} \leq e^{-rh^r}.
    \]
    By the top-layer average-degree estimate in \Cref{lem:nibble},
    \[
    d_{(k+1)} \leq \left(1+\frac1{20}d_{(k)}^{-h/10}\right) d_{(k)}\gamma_k^r.
    \]
    Since $d_{(k)}\geq d^{\eps_0/5}$, the term $d_{(k)}^{-h/10}$ is stretched-exponentially small in $L^{1/r}$, while $h^r=\rho^rL^{-(r-1)}$.  Hence, by increasing $d_*$ if necessary,
    \[
    \frac1{20}d_{(k)}^{-h/10} \leq \frac12 rh^r.
    \]
    Therefore
    \[
    d_{(k+1)} \leq d_{(k)} \exp\left(\frac12 rh^r-rh^r\right) \leq d_{(k)} \leq d.
    \]
    For the lower bound, the lower estimate in \Cref{lem:nibble} gives
    \[
    d_{(k+1)} \geq d_{(0)} \prod_{j=0}^{k} \left(1-\frac1{20}d_{(j)}^{-h/10}\right) \prod_{j=0}^{k}\gamma_j^r .
    \]
    Taking logarithms and using \eqref{eq:lowerboundforgamma}, we obtain
    \[
    \begin{aligned}
    \log d_{(k+1)} &\geq \log d_{(0)} + \sum_{j=0}^{k} \log\left(1-\frac1{20}d_{(j)}^{-h/10}\right) - r\sum_{j=0}^{k}\frac{\delta_j}{1-p_j}.
    \end{aligned}
    \]
    The first term satisfies
    \[
    \log d_{(0)} \geq \left(1-\frac{\eps_0}{4}\right)\log d.
    \]
    By \eqref{eq:B1}, the logarithmic contribution of the product error is at least $-\frac{\eps_0}{100}\log d$ for $d_*$ sufficiently large.  Also,
    \[
    \sum_{j=0}^{k}\delta_j \leq \left(1+\frac{3\eps_0}{5}\right)((k+1)h)^r \leq \left(1+\frac{3\eps_0}{5}\right)(1-10\eps_0)^rL,
    \]
    and $1/(1-p_j)\leq 1+\eps_0/50$. Since $rL=\log d$, we get
    \[
    \log d_{(k+1)} \geq \left(1-\frac{\eps_0}{4} -\frac{\eps_0}{100} - \left(1+\frac{4\eps_0}{5}\right)(1-10\eps_0)^r \right)\log d.
    \]
    By choosing $\eps_0$ sufficiently small in terms of $r$, the coefficient in brackets is larger than $\eps_0/5$. Hence
    \[
    d_{(k+1)}\geq d^{\eps_0/5}.
    \]
    This closes the induction and proves the claim.
\end{proof}

\subsection*{Summing the nibble rounds}

We now estimate the size of the independent set produced by the $T$ nibble
rounds.  From the vertex and top-layer degree estimates in \Cref{lem:nibble},
\[
    Q_{k+1} \geq (1-2\dr{k}^{-h/10})(1-p_k)Q_k, \qquad p_k=\frac{h}{\dr{k}^{1/r}}.
\]
Using \eqref{eq:B1}, we have
\[
    Q_k\geq (1-\eps_0/10)Q_0\geq (1-\eps_0/10)\frac{n}{d^{1/r}}
\]
uniformly for $0\leq k\leq T$.  
The independent set obtained in round $k$ has size at least
\[
    |I_k| \geq (1-3h-3\delta_k)p_kn_k = (1-3h-3\delta_k)hQ_k.
\]
By \eqref{eq:delta-k-small-section3} and $h\leq \rho$, we have
\[
    1-3h-3\delta_k\geq 1-10r\rho.
\]
Therefore the final independent set $I$ satisfies
\begin{align*}
    |I|&=\sum_{k=0}^{T-1}|I_k| \\
    &\geq (1-10r\rho)(1-\eps_0/10)\sum_{k=0}^{T-1}h\frac{n}{d^{1/r}}  \\
    &\geq (1-10r\rho-\eps_0)Th \frac{n}{d^{1/r}}.
\end{align*}
Since
\[
    T h \geq (1-10\eps_0)L^{1/r}-h,
\]
and $h=o(L^{1/r})$, increasing $d_*$ if necessary gives
\[
    Th\geq (1-10\eps_0-\rho)L^{1/r}.
\]
Consequently,
\[
    |I| \geq (1-20r\rho-11\eps_0) L^{1/r}\frac{n}{d^{1/r}}.
\]
Notice that $
    L^{1/r} = \left(\log d^{1/r}\right)^{1/r} = r^{-1/r}(\log d)^{1/r}$.
By our choice of $\rho$ and $\eps_0$,
\[
    20r\rho+11\eps_0<\eta.
\]
Hence 
\[
    |I| \geq (1-\eta)r^{-1/r}\left(\frac{\log d}{d}\right)^{1/r}n.
\]
This proves \Cref{thm:main}. 

\section{Open problems and concluding remarks}
Kelly and Postle suggested a conjecture \cite[Conjecture 2.2]{KP24} of local Shearer bound, which was proved by Martinsson and Steiner \cite{MS}, a strengthened Shearer's bound on the independence number for triangle-free graphs in the following way:
\begin{theo}[Martinsson and Steiner {\cite[Theorem 1.2]{MS}}]\label{thm:local_Shearer_bound}
    For every triangle-free graph $G$ there exists a probability distribution $\D$ on the independent sets of $G$ such that $$\P_{I\sim \D}[v\in I]\geq (1-o(1))\frac{\log d_G(v)}{d_G(v)}$$ for every $v\in V(G)$. Here the $o(1)$ represents a function of $d_G(v)$ that tends to $0$ as the degree grows.
\end{theo}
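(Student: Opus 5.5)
The statement is the Martinsson--Steiner local Shearer bound, \Cref{thm:local_Shearer_bound}, for triangle-free graphs. I would prove it by LP duality: reduce it to a \emph{weighted} Shearer bound, and prove that bound by a local-occupancy argument with the hard-core model.

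\emph{Duality reduction.} Set $f(v):=(1-\eta)\log d_G(v)/d_G(v)$ for a small fixed $\eta>0$ (with $f(v)=0$ when the degree is below a threshold $d_0(\eta)$). A distribution $\D$ with $\P_{I\sim\D}[v\in I]\geq f(v)$ for all $v$ exists if and only if, by Farkas' lemma (equivalently, the minimax theorem), every weight function $w\colon V(G)\to\R_{\geq0}$ satisfies
\[
\max_{I}\sum_{v\in I}w(v)\;\geq\;\sum_{v}w(v)f(v).
\]
So the task becomes: for every triangle-free $G$ and every $w\geq 0$, exhibit an independent set of weight at least $\sum_v w(v)f(v)$. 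It suffices to exhibit a random independent set $I$ with $\E\sum_{v\in I}w(v)\geq\sum_v w(v)f(v)$.

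\emph{Hard-core step.} Take $I$ from the hard-core model with a vertex-dependent fugacity $\lambda_v$, chosen to depend on $d_G(v)$, say $\lambda_v=d_G(v)^{-1+\eta/2}$ or something comparable. Fix $v$ and condition on $I\setminus(N(v)\cup\{v\})$. Because $G$ is triangle-free, $N(v)$ is an independent set, so the conditional law on $N[v]$ is explicit. Let $Z$ be the number of uncovered neighbours. Then
\[
\P[v\in I]\geq \frac{\lambda_v}{1+\lambda_v}\,\E(1+\lambda)^{-Z},\qquad \E|I\cap N(v)|\approx \E\frac{\lambda Z}{1+\lambda}.
\]
Combining these via Jensen's inequality gives the usual local trade-off: for each $v$, either $\P[v\in I]$ or $\E|I\cap N(v)|/d_G(v)$ is large. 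In the uniform-fugacity case this yields Shearer's constant.

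\emph{Main obstacle.} The uniform-degree argument averages $\alpha\,\P[v\in I]+\beta\,\E|I\cap N(v)|$ over $v$. With weights $w$ and nonuniform degrees, the neighbour term must be charged back to the neighbours, using weights of the form $w(u)/d_G(u)$ or a suitable transfer $\sum_{u\sim v}w(v)/d_G(v)$. That charging matches only if degrees are locally comparable, so this is where the argument is genuinely hard. I would first try the Martinsson--Steiner route: replace the single hard-core sample by an iterative random-greedy or nibble-type process. In such a process each vertex is activated at rate depending only on its own degree, and $v$ survives for time about $\log d_G(v)$ in its own clock. One then shows, via a differential-equation or martingale analysis localized at $v$, that the survival probability decays as in the regular large-girth picture, independently of how the neighbours' degrees differ. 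The local analysis would be the bulk of the work. The duality reduction and the final weighted summation are routine by comparison.
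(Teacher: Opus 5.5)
\paragraph{There is a genuine gap.} The paper does not prove this statement. It is quoted from \cite{MS}, and the paper only remarks that \cite{MS} prove the more general \Cref{thm:weighted_local_Shearer_bound}. This statement is the case $w\equiv1$ of that theorem, since then $w(N_G(v))/w(v)=d_G(v)$.

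Judged on its own terms, your proposal carries out only one step: the LP-duality reduction. That step is correct, since the achievable marginal vectors form the down-closed polytope $\operatorname{conv}\{\indicator_I\}$, so a separating functional can be taken nonnegative. But it is an equivalence. The statement ``every $w\geq0$ admits an independent set of weight at least $\sum_v w(v)f(v)$'' is exactly as hard as the theorem. Your dual weights also play a different role from the weights in \Cref{thm:weighted_local_Shearer_bound}, which change the local demand itself. Finally, with $\eta$ fixed you get one distribution per $\eta$, whereas the statement asks for a single distribution with a single error function of $d_G(v)$. You would therefore have to prove the dual inequality with a degree-dependent target $(1-\eps(d_G(v)))\log d_G(v)/d_G(v)$.

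\paragraph{The open step, and why your heuristic fails.} The real content is the step you leave open. You correctly observe that the hard-core/local-occupancy charging breaks down when neighbouring degrees differ. However, the fallback you sketch rests on a false heuristic. If activation rates depend only on a vertex's own degree, the survival of $v$ is governed by how fast its \emph{neighbours} enter $I$, and hence by their degrees.

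Here is a concrete counterexample.
\begin{itemize}
\item Take a $k$-regular graph of large girth with $k=\lfloor\sqrt D\rfloor$. Add a vertex $v$ adjacent to $D$ vertices $u_1,\dots,u_D$ at pairwise distance at least $3$.
\item Run the continuous-time random greedy with rate $1/d_G(u)$ at each vertex $u$, matching your ``own clock''.
\item Then $v\in I$ if and only if its clock rings before some $u_i$ has entered $I$ in the process on $G-v$.
\item For $s\leq1$, $u_i$ has entered by time $s$ whenever its own clock rings before $s$ and none of its other $k$ neighbours ring before $s$. These events are independent over $i$ and each has probability at least $s/(2e(k+1))$.
\end{itemize}
Hence
\[
\P[v\in I]\leq\int_0^1\frac1D e^{-Ds/(2e(k+1))}\,ds+e^{-D/(2e(k+1))}\leq\frac{2e(k+1)}{D^2}+e^{-D/(2e(k+1))}=o\Bigl(\frac{\log D}{D}\Bigr).
\]
Constant rates (plain random greedy) give $O(1/D)$ by the same computation.

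So survival is not ``independent of how the neighbours' degrees differ''. The process must be tuned to the neighbourhood; note that even the strengthened \Cref{thm:weighted_local_Shearer_bound} is phrased through the neighbourhood quantity $w(N_G(v))/w(v)$. Your proposal gives no mechanism for this tuning.

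Moreover, triangle-free graphs need not be locally tree-like, since they may contain many $C_4$'s. A large-girth differential-equation analysis localized at $v$ is therefore not available as stated. As written, the proposal is a correct reformulation plus an unresolved plan, not a proof.
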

In fact, they proved a more general theorem as follows: 
\begin{theo}[Martinsson and Steiner {\cite[Theorem 2.1]{MS}}]\label{thm:weighted_local_Shearer_bound}
    For every triangle-free graph $G$ and every strictly positive weight function $w:V(G)\to \R_+$ on the vertices there exists a probability distribution $\D$ on the independent sets of $G$ such that $$\P_{I\sim \D}[v\in I]\geq f\left(\frac{w(N_G(v))}{w(v)}\right)$$ for every vertex $v\in V(G)$, where $N_G(v)$ denotes the neighborhood of $v$,
$w(S):=\sum_{u\in S}w(u)$ for $S\subseteq V(G)$, and $f(x)=(1-o_x(1))\frac{\log x}{x}$ as $x\to\infty$.
\end{theo}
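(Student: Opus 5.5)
The plan is to turn the existence of the distribution $\D$ into a weighted optimization statement by LP duality, and then prove that statement with a local occupancy argument in the hard-core model. Fix $G$ and $w$, write $x_v:=w(N_G(v))/w(v)$, and set $g(v):=f(x_v)$. The set of vectors $(\P_{I\sim\D}[v\in I])_{v}$ is the convex hull of the indicator vectors of independent sets. By the separating hyperplane theorem (equivalently, von Neumann's minimax theorem), a distribution $\D$ with $\P[v\in I]\geq g(v)$ for all $v$ exists if and only if for every nonnegative $q\colon V(G)\to\R_+$ there is an independent set $I$ with
\[
  \sum_{v\in I}q(v)\;\geq\;\sum_{v\in V(G)}q(v)g(v).
\]
So it suffices to show that, for each fixed $q$, some random independent set $I$ satisfies $\E\sum_{v\in I}q(v)\geq\sum_v q(v)g(v)$.

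For a given $q$, I would take $I$ from a hard-core model with vertex-dependent fugacities $\lambda_v$, so that $\P(I)\propto\prod_{v\in I}\lambda_v$. Triangle-freeness supplies the key local fact. Condition on $I\setminus N_G[v]$ and let $U\subseteq N_G(v)$ be the set of neighbours of $v$ not blocked from outside $N_G[v]$. Since $N_G(v)$ is an independent set, the conditional law on $N_G[v]$ is explicit:
\[
  \P[v\in I\mid\cdot]=\frac{\lambda_v}{\lambda_v+\prod_{u\in U}(1+\lambda_u)},
  \qquad
  \E\Bigl[\textstyle\sum_{u\in N(v)\cap I}w(u)\,\Big|\,\cdot\Bigr]
  =\frac{\prod_{u\in U}(1+\lambda_u)}{\lambda_v+\prod_{u\in U}(1+\lambda_u)}\sum_{u\in U}\frac{\lambda_u}{1+\lambda_u}w(u).
\]
I would choose the fugacities so that $\lambda_u/(1+\lambda_u)$ is comparable to a small multiple of $w(u)$ relative to the scale of $v$. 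Then the two quantities, namely the probability that $v$ is occupied and the $w$-weight of occupied neighbours normalized by $w(v)$, are governed by a single parameter $y=\sum_{u\in U}\log(1+\lambda_u)$. Jensen's inequality then gives Shearer's trade-off: for a suitable $\beta_v>0$,
\[
  \P[v\in I]+\beta_v\,\frac{\E\, w(N(v)\cap I)}{w(v)\,x_v}\;\geq\;(1-o(1))\frac{\log x_v}{x_v}.
\]
This is the weighted analogue of the Davies--Jenssen--Perkins--Roberts local occupancy bound, with the total $w$-mass of $N(v)$ entering through $x_v$.

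Next I would sum this inequality against suitable multipliers $q'(v)$, chosen from $q$, and perform the usual double counting $\sum_v q'(v)\,w(N(v)\cap I)/w(v)=\sum_{u\in I}w(u)\sum_{v\sim u}q'(v)/w(v)$. The goal is to absorb the neighbour terms into $\sum_{u\in I}q(u)$. This is done by choosing $q'$ as the solution of a fixed-point or flow equation relating $q$, $w$ and the neighbour sums. If an exact choice is awkward, I would iterate the argument, updating $w$ to reflect $q$, and use the minimax reformulation once more.

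I expect the main obstacle to be making the neighbour terms cancel uniformly. The local bound is only good when $x_v$ is large, and vertices with small ratio $x_v$ or with very heterogeneous neighbourhood weights must be handled separately. These are exactly the vertices where the $o_x(1)$ slack is needed, and where $f$ may be taken small. I would first try to handle them by truncation: set $g(v)=0$ below a threshold and let $f(x)$ decay slowly enough that the error terms are dominated.
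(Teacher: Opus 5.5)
The paper does not prove this statement. It quotes it from Martinsson and Steiner~\cite{MS} in the concluding section only to motivate Conjecture~\ref{conj}, so your proposal has to stand on its own, and as written it has a genuine gap. Your minimax reduction is correct, and so are the conditional hard-core formulas on $N_G[v]$. The gap is in turning a local-occupancy trade-off into a guarantee for \emph{every} vertex, which is where the content of the theorem lies.

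The neighbour term is not an error that can be absorbed. Sum your displayed inequality over a $d$-regular graph with $w\equiv 1$ and $q'\equiv 1$. This gives only $(1+\beta)\E|I|\geq (1-o(1))\,n\log d/d$, so you would need $\beta=o(1)$. But the Jensen optimisation with weight $\beta$ on the neighbour term yields a right-hand side of order at most $(1+\beta\log d)/d$, which is $o(\log d/d)$ when $\beta=o(1)$. Getting the constant $1$ therefore needs $\beta_v\to\infty$ and a right-hand side of about $(1+\beta_v)\log x_v/x_v$. In other words, the neighbours of $v$ must pay for almost all of $v$'s bound.

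After double counting, that requires two things at once. Let $G_v$ denote your local right-hand side. You need $q'(u)+w(u)\sum_{v\sim u}\beta_v q'(v)/(w(v)x_v)\leq q(u)$ for every $u$, and also $\sum_v q'(v)G_v\geq \sum_v q(v)g(v)$. For $q\equiv 1$ on a regular graph this is plain double counting. For an arbitrary dual vector it is a balancing condition that in general has no solution.

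If your hard-core measure does not depend on $q$, the LP reduction buys nothing: ``for all $q$'' is literally ``$\P[v\in I]\geq g(v)$ for all $v$''. Already for $q=\indicator_{v_0}$, the constraints at the neighbours of $v_0$ force $q'\equiv 0$, so the scheme yields nothing. So the fugacities must depend on $q$, and you never say how. The remedies you list are not arguments: ``solve a fixed-point equation'', ``iterate, updating $w$'' (impossible, since $w$ fixes the targets $g(v)=f(x_v)$), and ``truncation''. This balancing is exactly what makes the local statement harder than Shearer's global bound.

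The local inequality itself is also not justified for general $w$. The Jensen step gives the constant $1$ only for small fugacities. In that regime $\log(1+\lambda_u)\approx\lambda_u/(1+\lambda_u)\approx\lambda_u$, so $y\approx\sum_{u\in U}\lambda_u$, while the occupied neighbour weight is governed by $\sum_{u\in U}\lambda_u w(u)$. These are functions of a single parameter for all $U\subseteq N(v)$ only when $w$ is constant on $N(v)$. Moreover, $\lambda_u$ is one number shared by all neighbours $v$ of $u$, so it cannot be tuned ``relative to the scale of $v$'' for each of them.

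Finally, ``for a suitable $\beta_v>0$'' is empty as stated, because a large enough $\beta_v$ makes any inequality of that form true. What the summation needs is the exact dependence of the right-hand side on $\beta_v$, and that is precisely where the problems above appear.
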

Naturally, we conjecture the following local version of \Cref{thm:main}: 
\begin{conj}\label{conj}
   For every $r\geq2$, there exists a function $f_r:[0,\infty)\to[0,1]$ satisfying
\[
    f_r(x)
    =
    (1-o_x(1))r^{-1/r}
    \left(\frac{\log x}{x}\right)^{1/r}
    \qquad\text{as }x\to\infty,
\]
such that every uncrowded $(r+1)$-uniform hypergraph $H$ admits a probability distribution $\mathcal D$ on its independent sets for which
\[
    \P_{I\sim\mathcal D}(v\in I)
    \geq
    f_r\bigl(d_{r,H}(v)\bigr)
\]
for every $v\in V(H)$.
\end{conj}
\printbibliography

@article {Ajtai1980Ramsy,
    AUTHOR = {Ajtai, M. and Koml\'os, J. and Szemer\'edi, E.},
     TITLE = {A note on {R}amsey numbers},
   JOURNAL = {J. Combin. Theory Ser. A},
  FJOURNAL = {Journal of Combinatorial Theory. Series A},
    VOLUME = {29},
      YEAR = {1980},
    NUMBER = {3},
     PAGES = {354--360},
   MRCLASS = {05C55 (05C35)},
  MRNUMBER = {600598},
MRREVIEWER = {J.\ E.\ Graver},
       %URL = {https://doi.org/10.1016/0097-3165(80)90030-8},
}

@article {Shearer1983ind1,
    AUTHOR = {Shearer, J. B.},
     TITLE = {A note on the independence number of triangle-free graphs},
   JOURNAL = {Discrete Math.},
  FJOURNAL = {Discrete Mathematics},
    VOLUME = {46},
      YEAR = {1983},
    NUMBER = {1},
     PAGES = {83--87},
   MRCLASS = {05C99},
  MRNUMBER = {708165},
MRREVIEWER = {Linda\ Lesniak},
      % URL = {https://doi.org/10.1016/0012-365X(83)90273-X},
}

@article {Spencer1982hyper,
    AUTHOR = {Ajtai, M. and Koml\'os, J. and Pintz, J. and Spencer, J. and
              Szemer\'edi, E.},
     TITLE = {Extremal uncrowded hypergraphs},
   JOURNAL = {J. Combin. Theory Ser. A},
  FJOURNAL = {Journal of Combinatorial Theory. Series A},
    VOLUME = {32},
      YEAR = {1982},
    NUMBER = {3},
     PAGES = {321--335},
   MRCLASS = {05C65},
  MRNUMBER = {657047},
MRREVIEWER = {F.\ Sterboul},
}

@inproceedings {Rodl1995hyper,
    AUTHOR = {Duke, R. A. and Lefmann, H. and R\"odl, V.},
     TITLE = {On uncrowded hypergraphs},
 BOOKTITLE = {Proceedings of the {S}ixth {I}nternational {S}eminar on
              {R}andom {G}raphs and {P}robabilistic {M}ethods in
              {C}ombinatorics and {C}omputer {S}cience, ``{R}andom {G}raphs
              '93'' ({P}ozna\'n, 1993)},
   JOURNAL = {Random Structures Algorithms},
  FJOURNAL = {Random Structures \& Algorithms},
    VOLUME = {6},
      YEAR = {1995},
    NUMBER = {2-3},
     PAGES = {209--212},
   MRCLASS = {05C65 (05C80)},
  MRNUMBER = {1370956},
       %URL = {https://doi.org/10.1002/rsa.3240060208},
}

@article {Verstraete2026hyper,
    AUTHOR = {Verstraete, J. and Wilson, C.},
     TITLE = {Independent sets in hypergraphs},
   JOURNAL = {Random Structures Algorithms},
  FJOURNAL = {Random Structures \& Algorithms},
    VOLUME = {68},
      YEAR = {2026},
    NUMBER = {1},
     PAGES = {Paper No. e70047, 9},
   MRCLASS = {05C69 (05C65)},
  MRNUMBER = {5022514},
       %URL = {https://doi.org/10.1002/rsa.70047},
}

@unpublished{DhawanJanzerMethuku2025,
	author = {A. Dhawan and O. Janzer and A. Methuku},
	title = {Independent sets and colorings of $K_{t,t,t}$-free graphs},
	howpublished = {\url{https://arxiv.org/abs/2511.17191} (preprint)},
	date = {2025},
}

@unpublished{Dhawan2026independence,
	author = {A. Dhawan and A. Methuku and M.-Q. Vo},
	title = {The independence number of uncrowded hypergraphs: bounds matching the shattering threshold},
	howpublished = {\url{https://arxiv.org/abs/2606.18048} (preprint)},
	date = {2026},
}

@unpublished{YuZhang2026Transfer, 
	author = {J. Yu and J. Zhang},
	title = {Hypergraph independence bounds: from maximum degree to average degree},
	howpublished = {\url{https://arxiv.org/abs/2604.28046} (preprint)},
	date = {2026},
}

@article{NV21,
    AUTHOR = {Nie, J. and Verstraete, J.},
     TITLE = {Randomized greedy algorithm for independent sets in regular uniform hypergraphs with large girth},
   JOURNAL = {Random Structures Algorithms},
  FJOURNAL = {Random Structures \& Algorithms},
    VOLUME = {59},
      YEAR = {2021},
    NUMBER = {1},
     PAGES = {79--95},
   MRCLASS = {05C65 (05C69 68W20)},
  MRNUMBER = {4270996},
MRREVIEWER = {W.\ G.\ Brown},
       %URL = {https://doi.org/10.1002/rsa.20994},
}

@article {CT91,
    AUTHOR = {Caro, Y. and Tuza, Z.},
     TITLE = {Improved lower bounds on {$k$}-independence},
   JOURNAL = {J. Graph Theory},
  FJOURNAL = {Journal of Graph Theory},
    VOLUME = {15},
      YEAR = {1991},
    NUMBER = {1},
     PAGES = {99--107},
   MRCLASS = {05C35 (05C65)},
  MRNUMBER = {1090733},
MRREVIEWER = {Michael\ Jacobson},
      % URL = {https://doi.org/10.1002/jgt.3190150110},
}

@article {MR14,
    AUTHOR = {Molloy, M. and Reed, B.},
     TITLE = {Colouring graphs when the number of colours is almost the
              maximum degree},
   JOURNAL = {J. Combin. Theory Ser. B},
  FJOURNAL = {Journal of Combinatorial Theory. Series B},
    VOLUME = {109},
      YEAR = {2014},
     PAGES = {134--195},
   MRCLASS = {05C15},
  MRNUMBER = {3269906},
MRREVIEWER = {Daqing\ Yang},
      % URL = {https://doi.org/10.1016/j.jctb.2014.06.004},
}

@article{GamarnikGoldberg2010Greedy,
    AUTHOR = {Gamarnik, D. and Goldberg, D. A.},
     TITLE = {Randomized greedy algorithms for independent sets and
              matchings in regular graphs: exact results and finite girth
              corrections},
   JOURNAL = {Combin. Probab. Comput.},
  FJOURNAL = {Combinatorics, Probability and Computing},
    VOLUME = {19},
      YEAR = {2010},
    NUMBER = {1},
     PAGES = {61--85},
   MRCLASS = {05C85 (05C69 05C70 68W20)},
  MRNUMBER = {2575098},
MRREVIEWER = {T.\ R. S. Walsh},
}

@article{LauerWormald2007LargeGirth,
    AUTHOR = {Lauer, J. and Wormald, N.},
     TITLE = {Large independent sets in regular graphs of large girth},
   JOURNAL = {J. Combin. Theory Ser. B},
  FJOURNAL = {Journal of Combinatorial Theory. Series B},
    VOLUME = {97},
      YEAR = {2007},
    NUMBER = {6},
     PAGES = {999--1009},
   MRCLASS = {05C69 (05C80)},
  MRNUMBER = {2354714},
MRREVIEWER = {Hamed\ Hatami},      
}

@article{KostochkaMubayiVerstraete2014Independent,
    AUTHOR = {Kostochka, A. and Mubayi, D. and Verstraete,
              J.},
     TITLE = {On independent sets in hypergraphs},
   JOURNAL = {Random Structures Algorithms},
  FJOURNAL = {Random Structures \& Algorithms},
    VOLUME = {44},
      YEAR = {2014},
    NUMBER = {2},
     PAGES = {224--239},
   MRCLASS = {05C65 (05C55 05C69 05D10 51E10)},
  MRNUMBER = {3158630},
MRREVIEWER = {Theodore\ C.\ Enns},
}

@article {MS,
    AUTHOR = {Martinsson, A. and Steiner, R.},
     TITLE = {Random independent sets in triangle-free graphs},
   JOURNAL = {Forum Math. Sigma},
  FJOURNAL = {Forum of Mathematics. Sigma},
    VOLUME = {13},
      YEAR = {2025},
     PAGES = {Paper No. e156, 19},
   MRCLASS = {05C69 (05C07 05C15 05C72)},
  MRNUMBER = {4963951},
}

@article {KP24,
    AUTHOR = {Kelly, T. and Postle, L.},
     TITLE = {Fractional coloring with local demands and applications to
              degree-sequence bounds on the independence number},
   JOURNAL = {J. Combin. Theory Ser. B},
  FJOURNAL = {Journal of Combinatorial Theory. Series B},
    VOLUME = {169},
      YEAR = {2024},
     PAGES = {298--337},
   MRCLASS = {05C15 (05C69)},
  MRNUMBER = {4776369},
MRREVIEWER = {James\ Tuite},
}

\end{document}